\newcommand{\LT}{\textsc{lt}}
\newcommand{\api}{\hat{a}_{\pi}}
\newcommand{\asigma}{\hat{a}_{\sigma}}
\newcommand{\Hilb}[2]{{\rm Hilb}(#1; \ #2)}
\newcommand{\NNeg}{\mathrm{NNeg}}
\newcommand{\NDes}{\mathrm{NDes}}
\newcommand{\ndes}{\mathrm{ndes}}
\newcommand{\nmajor}{\mathrm{nmajor}}
\newcommand{\fdes}{\mathrm{fdes}}
\newcommand{\fmajor}{\mathrm{fmajor}}
\newcommand{\Pc}{\mathcal{P}}
\newcommand{\C}{\mathbb{C}}
\newcommand{\Z}{\mathbb{Z}}
\newcommand{\N}{\mathbb{N}}
\newcommand{\R}{\mathbb{R}}
\newcommand{\Des}{\mathrm{Des}}
\newcommand{\maj}{\mathrm{maj}}
\newcommand{\des}{\mathrm{des}}
\newcommand{\cone}{\mathrm{cone}}
\renewcommand{\phi}{\varphi}
\renewcommand{\emptyset}{\varnothing}
\def\x{{\boldsymbol x}}
\newcommand\commentout[1]{}
\newtheorem{theorem}{Theorem}[section]
\newtheorem{proposition}[theorem]{Proposition}
\theoremstyle{remark}
\newtheorem{example}[theorem]{Example}
\newtheorem{remark}[theorem]{Remark}
\theoremstyle{definition}
\newtheorem{definition}[theorem]{Definition}
\newtheorem{algorithm}[theorem]{Algorithm}
\begin{document}

\title[Euler-Mahonian statistics and descent bases]{Euler-Mahonian statistics and descent bases for semigroup algebras}

\author{Benjamin Braun}
\address{Department of Mathematics\\
         University of Kentucky\\
         Lexington, KY 40506--0027}
\email{benjamin.braun@uky.edu}

\author{McCabe Olsen}
\address{Department of Mathematics\\
         University of Kentucky\\
         Lexington, KY 40506--0027}
\email{mccabe.olsen@uky.edu}

\subjclass[2010]{Primary: 52B20, 13P10, 05A19, 05E40  Secondary: 05A05, 05E05}


\date{9 October 2017}

\thanks{
The authors would like to thank the anonymous referees for their helpful comments and suggestions. The first author was partially supported by grant H98230-16-1-0045 from the U.S. National Security Agency.
The second author was partially supported by a 2016 National Science Foundation/Japanese Society for the Promotion of Science East Asia and Pacific Summer Institutes Fellowship award NSF OISE--1613525.
}


\begin{abstract}
We consider quotients of the unit cube semigroup algebra by particular $\Z_r\wr S_n$-invariant ideals. 
Using Gr\"obner basis methods, we show that the resulting graded quotient algebra has a basis where each element is indexed by colored permutations $(\pi,\epsilon)\in\Z_r\wr S_n$ and each element encodes the negative descent and negative major index statistics on $(\pi,\epsilon)$. 
This gives an algebraic interpretation of these statistics that was previously unknown. 
This basis of the $\Z_r\wr S_n$-quotients allows us to recover certain combinatorial identities involving Euler-Mahonian distributions of statistics. 
\end{abstract}

\maketitle


\section{Introduction}\label{Intro}
Let $[0,1]^n\subset \R^n$ denote the  the $n$-dimensional unit cube.
Let $S_n$ denote the symmetric group on $n$ elements. 
Let $[n]:=\{1,2,\ldots,n\}$.

\subsection{Polytope semigroup algebras}
Let $\Pc\subset \R^n$ be an $n$-dimensional convex lattice polytope, let $m\cdot \Pc=\{m\alpha : \alpha\in \Pc\}$ denote the $m$th dilate of $\Pc$,  and consider the \emph{cone over} $\Pc$
	\[
	\cone(\Pc):={\rm span}_{\R\geq 0}\{(1,p) \, : \, p\in\Pc\} \, .
	\]
The {\it affine semigroup algebra of} $\Pc$ over $\C$ is 
	\[
	\C[\Pc]:=\C[t^m\cdot\x^{p} :(m,p)\in \cone(\Pc)\cap\Z^{n+1} ]\subset \C[t, x_1^{\pm 1},x_2^{\pm 1},\ldots, x_n^{\pm 1}] \, ,
	\]
where $\x^{p}=x_1^{p_{1}}x_2^{p_{2}}\cdots x_n^{p_{n}}$ when $(m,p)\in \cone(\Pc)\cap\Z^{n+1}$. 
Given that $\cone(\Pc)$ is a pointed, rational cone in $\R^{n+1}$,  $\cone(\Pc)\cap \Z^{n+1}$ has a unique minimal generating set called a \emph{Hilbert basis}. 
Subsequently, the algebra $\C[\Pc]$ is a finitely generated, graded commutative algebra. 
If $\Pc$ satisfies the \emph{integer decomposition property}, that is for any $q\in m\cdot\Pc \cap \Z^n$, we can express $q=q_1+q_2+\cdots+q_m$ where each $q_i\in \Pc\cap\Z^n$, then we can more concisely describe $\C[\Pc]$. 
In particular, if $\Pc\cap\Z^n=\{ p_1,p_2,\ldots,p_k\}$ and $\Pc$ satisfies the integer decomposition property, then
	\[
	\C[\Pc]:=\C[t\cdot\x^{p_i} \, : \, 1\leq i\leq k]\subset \C[t, x_1^{\pm 1},x_2^{\pm 1},\ldots, x_n^{\pm 1}] \, .
	\]
For greater detail and background of semigroup algebras and cones over polytopes, see \cite{MillerSturmfels}.

Let $\Pc=[0,1]^n$, which is known to satisfy the integer decomposition property. 
Let $R_n:=\C\left[ [0,1]^n \right]$ denote the affine semigroup algebra of $[0,1]^n$  which has the following description:
	\[
	R_n= \C\left[t\cdot x_{a_1}\cdots x_{a_i} \ | \ A=\{a_1,\ldots,a_i\}\subseteq [n] \right] \subset \C[t, x_1,x_2,\ldots,x_n] \, .
	\]
Alternatively, we can define $R_n$ as the quotient of a polynomial ring by a toric ideal. 
Let $T_n$ be a polynomial ring in $2^n$ variables, where each variable corresponds to a subset of $[n]$, thus
\[
T_n:=\C\left[z_A: A\subseteq [n]\right] \, .
\]
Define the toric ideal  
	\[
	I_n:=\langle z_Az_B-z_{A\cap B}z_{A\cup B} \ | \  A\not\subseteq B \mbox{ and } B\not\subseteq A  \rangle.
	\]
It is known that $R_n\cong T_n/I_n$. For background and details see \cite{MillerSturmfels}. 
This algebra also arises as the Hibi ring for the antichain on $n$ elements, as the unit cube is the order polytope of the antichain (see e.g. \cite{Ene,Hibi,ACCP} for additional details of Hibi rings). 
We will use $R_n$ to denote $T_n/I_n$ when it is convenient. 

Let $\mathcal{A}=\bigoplus_{b \in\Z^n} \mathcal{A}_b$ be a finitely generated,  $\Z^n$-graded commutative $\C$-algebra. 
The \emph{Hilbert series} of $\mathcal{A}$ is 
	\begin{equation}\label{HilbertSeries}
	\Hilb{\mathcal{A}}{\mathbf{z}}=\sum_{b\in\Z^n} \dim_\C(\mathcal{A}_b)\cdot \mathbf{z}^b.
	\end{equation}
For a polytope semigroup algebra $\C[\Pc]$, it is common to consider $\C[\Pc]$ as an $\N$-graded algebra where the grading is given by the $t$-degree. In this case, we have 
	\[
	\Hilb{\C[\Pc]}{t}=\sum_{m\geq 0}\#\left(m \Pc\cap \Z^n\right)\cdot t^m 
	\]
which coincides with the \emph{Ehrhart series} of $\Pc$. 
The reader is invited to consult \cite{MillerSturmfels} and \cite{BeckRobins} for background on Hilbert series and Ehrhart theory receptively.
In the case of $R_n$, it is well-known that the Hilbert series with respect to the $t$-degree is $\sum_{k\geq 0}(k+1)^nt^k$, leading us to the topic of Euler-Mahonian identities.


\subsection{Euler-Mahonian identities}
We first review certain permutation statistics.
For $\pi \in S_n$ with $\pi=\pi_1\pi_2\cdots\pi_n$,  the {\it descent set} is defined to be 
	\[
	\Des (\pi):=\{i\in \{1,2,\dots, n-1 \}: \pi_i> \pi_{i+1} \}.
	\]
Moreover, the {\it descent number} is $	\des (\pi):=\# \Des (\pi)$.
The descent number is encoded in the \emph{Eulerian polynomial} $A_n(t):=\sum_{\pi\in S_n}t^{\des (\pi)}$ which satisfies the identity 
	\begin{align}
	\sum_{k\geq 0}(k+1)^nt^k=\frac{A_n(t)}{(1-t)^{n+1}} \, ,
	\end{align}		
first studied by Euler \cite{Euler}. 
This identity was generalized to a bivariate identity usually attributed to Carlitz using the major index; see \cite{B-B} and the references therein for more details on the history of these identities. 
Given $\pi\in S_n$, the {\it major index} of $\pi$ is defined to be 
	\[
	\maj (\pi):=\sum_{j\in\Des(\pi)}j.
	\] 

\begin{theorem}[Carlitz, \cite{Carlitz}]\label{EMI}
	For all $n\geq 1$, 
	\[
	\sum_{k\geq 0}[k+1]_q^n t^k=\frac{\sum_{\pi\in S_n} t^{\des(\pi)}q^{\maj(\pi)}}{\prod_{j=0}^n(1-tq^j)}
	\]
	where $[k+1]_q=1+q+q^2+\cdots+q^k$.
\end{theorem}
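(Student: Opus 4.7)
The plan is to give a direct combinatorial proof via Stanley's $P$-partition technique, specialized to the $n$-element antichain.

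First, I would expand the left-hand side combinatorially. Since $[k+1]_q$ is the $q$-enumeration of $\{0,1,\ldots,k\}$, the power $[k+1]_q^n$ enumerates tuples in $\{0,1,\ldots,k\}^n$ weighted by the sum of coordinates. Interchanging the order of summation yields
\[
\sum_{k\geq 0}[k+1]_q^n t^k \;=\; \sum_{a\in\N^n} q^{|a|} \sum_{k\geq \max_i a_i} t^k \;=\; \frac{1}{1-t}\sum_{a\in\N^n} q^{|a|}\, t^{\max_i a_i},
\]
where $|a|:=a_1+\cdots+a_n$.

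Next, I would apply the sorting bijection
$\N^n \leftrightarrow \{(\pi,b) : \pi\in S_n,\ b_1\geq b_2\geq\cdots\geq b_n\geq 0,\ b_i>b_{i+1}\text{ whenever } i\in\Des(\pi)\}$.
Given $a$, sort its entries weakly decreasingly to obtain $b$, and break ties by increasing index to read off $\pi$ (so that $a_{\pi(i)}=b_i$). Under this correspondence, $|a|=|b|$ and $\max_i a_i = b_1$.

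Then I would isolate the $\pi$-contribution via the change of variables $b = c + d(\pi)$, where $d(\pi)_i := \#\{j \in \Des(\pi) : j \geq i\}$. The admissibility conditions on $b$ translate precisely into the single condition $c_1 \geq c_2 \geq \cdots \geq c_n \geq 0$ on $c$. Since $|d(\pi)| = \sum_{j\in\Des(\pi)} j = \maj(\pi)$ and $d(\pi)_1 = \des(\pi)$, this gives
\[
\sum_{a\in\N^n} q^{|a|}\, t^{\max_i a_i} \;=\; \left(\sum_{\pi\in S_n} t^{\des(\pi)} q^{\maj(\pi)}\right) \sum_{c_1\geq\cdots\geq c_n\geq 0} q^{|c|}\, t^{c_1}.
\]
The partition generating function on the right is evaluated by the substitution $e_i := c_i - c_{i+1}$ (with $c_{n+1}=0$), yielding $\prod_{i=1}^n (1-tq^i)^{-1}$. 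Combining with the prefactor $1/(1-t)$ from the first step produces the denominator $\prod_{j=0}^n (1-tq^j)$.

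The main obstacle is setting up the sorting bijection with the correct tie-breaking convention so that the descent positions of $\pi$ are exactly those where $b$ is forced to strictly decrease; this is precisely the content of the $P$-partition theorem for the antichain. Once that bijection is justified, the remaining manipulations are a routine generating function calculation.
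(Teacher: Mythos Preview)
Your proof is correct and follows the classical $P$-partition route: expand the left side as a sum over $\N^n$, sort to obtain a pair $(\pi,b)$ with strict drops at descent positions, then shift by $d(\pi)$ to decouple $\pi$ from a free partition. This is essentially the standard argument recorded in \cite[Lemma~3.15.3]{EC1}, and all of your steps check out.

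The paper, however, proves Theorem~\ref{EMI} by a genuinely different mechanism. It realizes the left-hand side as the bigraded Hilbert series of the cube semigroup algebra $R_n\cong T_n/I_n$ under $\deg(z_A)=tq^{|A|}$, invokes Hochster's theorem to know $R_n$ is Cohen--Macaulay, and observes that the $\hat e_k$ form a homogeneous system of parameters with $\Hilb{\mathcal C_{1,n}}{t,q}=\prod_{j=0}^n(1-tq^j)^{-1}$. The numerator is then identified via Theorem~\ref{basistheorem}: the Gr\"obner-basis computation shows that the Garsia--Stanton monomials $\hat a_\pi$ form a $\C$-basis of $T_n/J_{1,n}$, and $\deg(\hat a_\pi)=t^{\des(\pi)}q^{\maj(\pi)}$. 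Your argument is more elementary and self-contained, requiring no commutative algebra; the paper's argument is less direct but yields structural dividends---an explicit descent basis for the quotient $R_n/\overline{invar(1,n)}$ and, more to the point of the paper, a template that extends uniformly to the $\Z_r\wr S_n$ case and the negative statistics (Theorem~\ref{neg}), where the analogous combinatorial bijection is less transparent.
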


In this form, this identity is due to Carlitz \cite{Carlitz}, though with some effort one can derive it from the works of MacMahon \cite[Volume 2, Chapter IV, \S462]{macmahon}.
We will call this identity the {\it Euler-Mahonian identity}, which has arisen in a variety of contexts in recent years. 
Some such scenarios include lecture hall partition generating function identities \cite{P-S}, polyhedral-geometric studies of the semigroup algebra for $\cone([0,1]^n)$ \cite{B-B}, Hilbert series related to a descent basis for the coinvariant algebra of $S_n$ \cite{A-B-R}, 0-Hecke algebra actions on Stanley-Reisner rings \cite{Huang}, and quasisymmetric function identities \cite{S-Wachs}.

Generalizing to colored permutation groups $\Z_r\wr S_n$, one can consider the \emph{flag statistics} as well as the {\it negative statistics}, the latter of which we define in Section~\ref{sec:stats}.
These statistics were orginally introduced for the hyperoctohedral group $B_n\cong \Z_2\wr S_n$ \cite{A-B-R1} and generalized for $r\geq 2$ to $\Z_r \wr S_n$ \cite{Bagno,Bagno-Biagioli}.
For these families of statistics, the following Euler-Mahonian identities exist.

\begin{theorem}[Bagno, \cite{Bagno}]\label{neg}
Given any $r\geq 2$, for all $n\geq 1$, 
	\[
	\sum_{k\geq 0}[k+1]_q^n t^k=\frac{\sum_{(\rho,\epsilon)\in \Z_r \wr S_n}t^{\ndes(\rho,\epsilon)}q^{\nmajor(\rho,\epsilon)}}{(1-t)\prod_{j=1}^n(1-t^rq^{rj})}
	\] 
\end{theorem}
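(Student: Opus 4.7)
The plan is to adapt the polyhedral proof of Carlitz's identity from \cite{B-B} to the colored setting. Both sides of Bagno's identity are expansions of the same $q$-refined Hilbert series of $R_n$, where $t$ tracks the cone-height coordinate and $q$ tracks the sum of the remaining $n$ coordinates: indeed, the $t^k$-component is $\sum_{(a_1,\ldots,a_n)\in\{0,\ldots,k\}^n} q^{a_1+\cdots+a_n} = [k+1]_q^n$, giving the LHS. So it suffices to produce a decomposition of $\cone([0,1]^n)\cap\Z^{n+1}$ whose pieces, indexed by $\Z_r\wr S_n$, have Hilbert series summing to the RHS.

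Starting from the well-known unimodular triangulation $\{C_\rho : \rho\in S_n\}$ used to prove Carlitz's identity, where $C_\rho$ is the simplicial cone with ray generators $(1,\vec 0)$ and $(1,\ e_{\rho(1)}+\cdots+e_{\rho(j)})$ for $j=1,\ldots,n$, I would refine each $C_\rho$ semigroup-theoretically by replacing the $n$ nontrivial ray generators with their $r$-fold multiples $(r,\ r(e_{\rho(1)}+\cdots+e_{\rho(j)}))$. The resulting sub-semigroup of $C_\rho \cap \Z^{n+1}$ has Hilbert series denominator $(1-t)\prod_{j=1}^n(1-t^rq^{rj})$, matching the RHS, and its fundamental parallelepiped contains $r^n$ coset representatives naturally indexed by $\beta \in \{0,1,\ldots,r-1\}^n$, each of bidegree $(\sum_j\beta_j,\ \sum_j j\beta_j)$. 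Crucially, because $|e_{\rho(1)}+\cdots+e_{\rho(j)}| = j$ is independent of $\rho$, these bidegrees are $\rho$-independent, and summing over cosets yields the factor $\prod_{j=1}^n(1 + tq^j + \cdots + t^{r-1}q^{(r-1)j})$ for every $C_\rho$. The standard half-open boundary choice from the Carlitz decomposition then contributes the additional factor $t^{\des(\rho)}q^{\maj(\rho)}$.

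Assembling the contributions over all $\rho$, the total numerator becomes
\[
\left(\sum_{\rho\in S_n}t^{\des(\rho)}q^{\maj(\rho)}\right)\prod_{j=1}^n\frac{1-t^rq^{rj}}{1-tq^j},
\]
and dividing by the common denominator $(1-t)\prod_{j=1}^n(1-t^rq^{rj})$ recovers exactly the Carlitz RHS, which equals the LHS by Theorem~\ref{EMI}. What remains is to identify each pair $(\rho,\beta)$ with a colored permutation $(\rho,\epsilon)\in \Z_r\wr S_n$ in such a way that $\ndes(\rho,\epsilon) = \des(\rho) + \sum_j\beta_j$ and $\nmajor(\rho,\epsilon) = \maj(\rho) + \sum_j j\beta_j$.

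The main obstacle is this last identification: the $\beta$-index on the coset representatives is attached to the levels of the flag $\emptyset \subsetneq \{\rho(1)\} \subsetneq \cdots \subsetneq [n]$, whereas a color vector $\epsilon$ is attached to the positions $1,\ldots,n$ of the permutation. Verifying that the definitions of $\ndes$ and $\nmajor$ from Section~\ref{sec:stats} match this polyhedral interpretation under a suitable dictionary (such as $\beta_j \leftrightarrow \epsilon_{\rho(j)}$) will require careful unpacking of the descent and major-index conventions for colored permutations, and is the step that genuinely depends on the specifics of those statistics.
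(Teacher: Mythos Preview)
Your polyhedral strategy is sound and does yield the identity, but it is a genuinely different route from the paper's. The paper establishes the same factored numerator
\[
\left(\sum_{\sigma\in S_n}t^{\des(\sigma)}q^{\maj(\sigma)}\right)\prod_{j=1}^n\left(1+tq^j+\cdots+t^{r-1}q^{(r-1)j}\right)
\]
algebraically: it shows $R_n$ is Cohen--Macaulay, that the generators of $invar(r,n)$ form a homogeneous system of parameters with Hilbert series $(1-t)^{-1}\prod_{j=1}^n(1-t^rq^{rj})^{-1}$, and then reads off the numerator from the explicit negative descent basis $\{b^r_{(\sigma,X)}\}$ of $T_n/J_{r,n}$ produced by the Gr\"obner computation in Section~\ref{groebner}. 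Your approach instead refines the unimodular triangulation of $\cone([0,1]^n)$ and computes lattice-point generating functions directly; this is essentially the Beck--Braun argument of~\cite{B-B}, which the paper cites but deliberately replaces. What the paper's route buys is the descent basis itself (Theorem~\ref{basistheorem}), which is of independent algebraic interest; what yours buys is a shorter path to the bare identity, bypassing the Gr\"obner machinery.

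Where you should be careful is the final bijection, and your tentative dictionary is not right. You propose to match the pair $(\rho,\beta)$ with a colored permutation $(\rho,\epsilon)$ sharing the \emph{same} underlying permutation $\rho$, via something like $\beta_j\leftrightarrow \epsilon_{\rho(j)}$. This fails: for a general colored permutation $(\pi,\epsilon)$ one has $\Des_A(\pi,\epsilon)\neq \Des(\pi)$, so fixing $\pi=\rho$ cannot give $\des_A(\pi,\epsilon)=\des(\rho)$ for all color vectors. The correct bijection is the one in Remark~\ref{notation} and Proposition~\ref{bijectionprop}: send $(\sigma,\beta)$ (your $(\rho,\beta)$) to the multiset $X$ with $j$ appearing $\beta_j$ times, let $(\rho,\delta)\in\mathcal{I}_{r,n}$ be the unique increasing element with $\NNeg((\rho,\delta)^{-1})=X$, and set $(\pi,\epsilon):=(\rho,\delta)\circ\sigma$. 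Then $\Des_A(\pi,\epsilon)=\Des(\sigma)$ and $\NNeg((\pi,\epsilon)^{-1})=X$, so $\ndes(\pi,\epsilon)=\des(\sigma)+\sum_j\beta_j$ and $\nmajor(\pi,\epsilon)=\maj(\sigma)+\sum_j j\beta_j$ as you need. In particular, the underlying permutation $\pi$ of the resulting colored permutation is \emph{not} the cone index $\sigma$ in general. Once you invoke this bijection (which the paper imports from~\cite[Proposition~5.11]{B-B}), your argument is complete.
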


\begin{theorem}[Bagno-Biagioli, \cite{Bagno-Biagioli}]\label{flag}
Given any $r\geq 2$, for all $n\geq 1$,
	\[
	\sum_{k\geq 0}[k+1]_q^n t^k=\frac{\sum_{(\rho,\epsilon)\in \Z_r \wr S_n}t^{\fdes(\rho,\epsilon)}q^{\fmajor(\rho,\epsilon)}}{(1-t)\prod_{j=1}^n(1-t^rq^{rj})}
	\]
\end{theorem}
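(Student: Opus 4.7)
The plan is to realize the left-hand side as a lattice-point generating function for $\cone([0,1]^n)$ and to exhibit the right-hand side via a half-open simplicial decomposition whose fundamental parallelepipeds have been ``coarsened'' by a factor of $r$, so that they contain exactly $r^n$ lattice points naturally indexed by color vectors $\epsilon\in\Z_r^n$. Interchanging the order of summation gives
\[
\sum_{k\geq 0}[k+1]_q^n\, t^k \;=\; \sum_{(m,\p)\in\cone([0,1]^n)\cap\Z^{n+1}} t^m\, q^{p_1+\cdots+p_n}\,,
\]
so the problem reduces to computing this bivariate Hilbert series, where $\p=(p_1,\ldots,p_n)$.

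I would apply the standard staircase triangulation of $[0,1]^n$ into the $n!$ unimodular simplices $\Delta_\pi=\{x\in[0,1]^n : x_{\pi(1)}\leq\cdots\leq x_{\pi(n)}\}$ and pass to a half-open refinement so that the resulting simplicial cones $\sigma_\pi$ disjointly cover $\cone([0,1]^n)$. Instead of using the usual generators $(1,\mathbf 0),(1,\e_{\pi(1)}),\ldots,(1,\e_{\pi(1)}+\cdots+\e_{\pi(n)})$ as in the proof of Theorem~\ref{EMI}, I would work with the coarser generators
\[
w_0=(1,\mathbf 0),\qquad w_j=(r,\, r(\e_{\pi(1)}+\e_{\pi(2)}+\cdots+\e_{\pi(j)}))\quad(j=1,\ldots,n)\,.
\]
These span the same rational cone $\sigma_\pi$ but generate a sublattice $L_\pi\subset\Z^{n+1}$ of index $r^n$; consequently the semigroup generated by $w_0,\ldots,w_n$ has Hilbert series $1/((1-t)\prod_{j=1}^n(1-t^r q^{rj}))$, matching the desired denominator, and the associated fundamental parallelepiped contains precisely $r^n$ lattice points of $\Z^{n+1}$.

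Summing over $\pi$, the numerator should then equal $\sum_{(\pi,\epsilon)}t^{m(\pi,\epsilon)}q^{|\p(\pi,\epsilon)|}$, where $(m,\p)$ ranges over the $r^n$ lattice points of the half-open fundamental parallelepiped of $\sigma_\pi$ and the color vector $\epsilon$ indexes these points. The main obstacle is verifying that the assignment $\epsilon\mapsto(m,\p)$, together with the half-open choices on the triangulation, produces exactly $m=\fdes(\pi,\epsilon)$ and $p_1+\cdots+p_n=\fmajor(\pi,\epsilon)$ as defined in \cite{Bagno-Biagioli}; this matching is subtle because $\fdes$ and $\fmajor$ combine the underlying permutation's descent and major index with the colors in a specific way, and the per-permutation contributions do not factor cleanly. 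A helpful sanity check is the $r=1$ degeneration recovering Theorem~\ref{EMI}, which fixes the correct half-open shelling, together with verification of the $n=2$, $r=2$ case by hand. An alternative route would be to construct a statistic-preserving bijection $(\fdes,\fmajor)\leftrightarrow(\ndes,\nmajor)$ on $\Z_r\wr S_n$ and invoke Theorem~\ref{neg}, but such a bijection on colored permutations is likely at least as delicate as the polyhedral matching proposed above.
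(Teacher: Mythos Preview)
The paper does not prove Theorem~\ref{flag}. It is merely stated and attributed to Bagno--Biagioli~\cite{Bagno-Biagioli}; the new proofs in Section~\ref{proofs} cover only Theorem~\ref{EMI} and Theorem~\ref{neg}. The entire machinery of the paper---the ideal $invar(r,n)$, the Gr\"obner basis, and the negative descent basis $\{b^r_{(\sigma,X)}\}$---is tailored to $(\ndes,\nmajor)$, and the authors explicitly remark (end of Section~\ref{quotients}) that their basis is distinct from the flag-descent basis of Adin--Brenti--Roichman. So there is no proof in the paper against which to compare your proposal.

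As for the proposal itself: the polyhedral route you sketch is precisely the strategy of Beck--Braun~\cite{B-B}, where both the negative and the flag identities are derived from a coarsened half-open decomposition of $\cone([0,1]^n)$ with generators scaled by $r$. Your outline is therefore a reasonable roadmap, but it is not yet a proof: as you yourself flag, the real work is the explicit bijection between the $r^n$ lattice points of each fundamental parallelepiped and the color vectors $\epsilon$ so that the bidegree reads off $(\fdes,\fmajor)$, and you have not supplied that matching. In~\cite{B-B} this is done by a careful explicit indexing and is not automatic; the ``sanity checks'' you propose ($r=1$ and $n=r=2$) are helpful but do not substitute for it. Your alternative route via a statistic-preserving bijection $(\fdes,\fmajor)\leftrightarrow(\ndes,\nmajor)$ combined with Theorem~\ref{neg} would also work and is in fact how equidistribution of the two pairs is typically established in the literature, but again the bijection must actually be produced.
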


\subsection{Our Contributions}

The goal of this paper is twofold. First, we produce a new algebraic interpretation of negative permutation statistics by considering $\Z_r\wr S_n$-quotient algebras of $R_n$. 
To do so, we consider an ideal $\overline{invar(r,n)}\subset R_n$ which is generated by certain invariants of $R_n$ under a $\Z_r\wr S_n$-action, defined in detail in Section~\ref{quotients}. 
We obtain the following theorem using Gr\"obner basis techniques.   

\begin{theorem} [see Theorem \ref{basistheorem}]
There exists a basis of $R_n/\overline{invar(r,n)}$ of the form $\{b^r_{(\sigma,X)}+\overline{invar(r,n)}\}$ with elements indexed by pairs $(\sigma,X)$ that are in bijection with colored permutations $(\pi,\epsilon)\in\Z_r\wr  S_n$.
Further, $b^r_{(\sigma,X)}$ encodes $\ndes(\pi,\epsilon)$ and $\nmajor(\pi,\epsilon)$. 
The bijective correspondence of $(\sigma,X)\leftrightarrow(\pi,\epsilon)$ is given in Remark \ref{notation}.
\end{theorem}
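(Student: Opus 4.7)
The approach is a standard-monomial/Gröbner basis argument in the Hibi presentation $R_n = T_n/I_n$. I would first fix a monomial order on $T_n$ that refines subset inclusion on the indices of the variables $z_A$, for example ordering $z_A$ first by $|A|$ and then lexicographically, extended to a reverse-lex order on monomials. Under such an order the toric binomials $z_Az_B - z_{A\cap B}z_{A\cup B}$ form a Gröbner basis of $I_n$ with leading terms $z_Az_B$ for incomparable $A, B$, so the standard monomials modulo $I_n$ are exactly the chain monomials $z_{A_1}z_{A_2}\cdots z_{A_k}$ with $A_1 \subseteq A_2 \subseteq \cdots \subseteq A_k$. This is the classical Hibi-ring basis and will be the starting point for the entire argument.

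Next, I would take a generating set for $invar(r,n)$ consisting of $\Z_r \wr S_n$-invariant polynomials in the $z_A$ (symmetrized power sums / elementary symmetric combinations adapted to the wreath product action), lift the ideal $\overline{invar(r,n)}$ to $T_n$, and compute its initial ideal by reducing modulo the Gröbner basis of $I_n$ and running Buchberger reductions against the toric straightening relations. Once the initial ideal is in hand, the standard monomials form a $\C$-basis of $R_n/\overline{invar(r,n)}$. I expect these standard monomials to be of the form $b^r_{(\sigma,X)}$ where $\sigma$ is a chain of subsets indexed by the descent positions of a permutation $\pi \in S_n$ (the usual Garsia--Stanton-style chain $\{\pi(1)\} \subsetneq \{\pi(1), \pi(2)\} \subsetneq \cdots$ truncated to descents) and $X \subseteq [n]$ records the coloring $\epsilon$. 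The total $t$-degree of $b^r_{(\sigma,X)}$ then reads off $\nmajor(\pi,\epsilon)$ and the number of chain-factors together with the size of $X$ reads off $\ndes(\pi,\epsilon)$. With the bijection of Remark~\ref{notation} this is essentially a bookkeeping check once the monomial shape is identified.

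The main obstacle will be verifying that the guessed initial ideal is correct, that is, that the proposed $b^r_{(\sigma,X)}$ really do exhaust the standard monomials and that no further S-pair reductions produce new leading terms. I would handle this in two steps: first, check the small cases $r=2, n=2,3$ by hand (or computer algebra) to identify the precise form of the leading terms arising from the symmetrized invariants, and then use Theorem~\ref{neg} as a dimensional sanity check — the bivariate Hilbert series predicted by the conjectured basis must equal the numerator of the Euler--Mahonian identity, so matching cardinalities in each multidegree gives spanning for free once linear independence has been established via the Gröbner argument. The encoding of $\ndes$ and $\nmajor$ in the basis elements is then forced by comparing monomial degrees with the statistic-generating polynomial in Theorem~\ref{neg}.
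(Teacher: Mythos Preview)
Your proposal is essentially the paper's approach: work in the Hibi presentation $T_n/I_n$, fix the grevlex order with variables sorted by $|A|$ then lex, and produce enough of the initial ideal of $J_{r,n}=I_n+invar(r,n)$ via $S$-polynomial computations so that the standard monomials are exactly the $b^r_{(\sigma,X)}$. The paper carries out precisely those $S$-polynomial reductions (this is the bulk of Theorem~\ref{gb}) and identifies the standard monomials combinatorially (Proposition~\ref{LT}).

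Two points of divergence are worth flagging. First, your plan to invoke Theorem~\ref{neg} for the dimension count would be circular relative to the paper's aims: one of the stated goals is to \emph{recover} Theorem~\ref{neg} from the basis theorem via the bigraded Hilbert series, so you cannot lean on it here. The paper instead uses the univariate Ehrhart/Hilbert series of $R_n$, Hochster's theorem that $R_n$ is Cohen--Macaulay, and the fact that $z_\emptyset,\hat e_{r,1},\dots,\hat e_{r,n}$ form a homogeneous system of parameters, to compute $\dim_\C(T_n/J_{r,n})=r^n n!$ directly. That count, together with the inclusion $N_{r,n}\subseteq\langle\LT(J_{r,n})\rangle$ coming from the $S$-polynomial calculations, forces equality and finishes the argument without appeal to Theorem~\ref{neg}. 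Second, two small corrections: $X$ is a \emph{multiset} on $[n]$ with each element of multiplicity at most $r-1$ (not a subset), and under the bigrading $\deg(z_A)=tq^{|A|}$ the $t$-degree of $b^r_{(\sigma,X)}$ is $\ndes(\pi,\epsilon)$ while the $q$-degree is $\nmajor(\pi,\epsilon)$, not the other way around.
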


Our second goal is to consider a multigraded Hilbert series of $R_n$ and the quotient $R_n/\overline{invar(r,n)}$. These computations allow us to recover the identities given by Theorem \ref{EMI} and Theorem \ref{neg}.
 These new proofs provide a new perspective on identities of this type. 

Moreover, the new proof of Theorem \ref{EMI} serves to expand connections between the commutative-algebraic and representation-theoretic methods \cite{A-B-R} for the $S_n$-coinvariant algebra $\C[x_1,\cdots, x_n]/\mathscr{I}_n$, where $\mathscr{I}_n:=<e_1,\ldots,e_n>$ with $e_i$ denoting the $i$-th elementary symmetric function, and polyhedral-geometric methods for $\cone([0,1]^n)$ \cite{B-B}. Additionally, we provide a short proof that this quotient algebra is isomorphic as a graded $S_n$-module to the  $S_n$-coinvariant algebra $\C[x_1,\cdots, x_n]/\mathscr{I}_n$.
We believe that these results, like those given in \cite{B-B}, support the idea that $\cone([0,1]^n)$ and its associated semigroup algebra are analogues of the polynomial ring in $n$ variables that give rise to interesting and different structures and results in similar contexts.


\section{Colored permutation groups and decent sets} \label{sec:stats}

The wreath product $\Z_r \wr S_n\cong (\Z_r)^n\ltimes S_n$ of a cyclic group of order $r$ with $S_n$ consists of pairs $(\pi,\epsilon)$ where $\pi\in S_n$ and $\epsilon\in \{\omega^0,\omega^1,\ldots,\omega^{r-1}\}^n$ for $\omega:=e^{2\pi i/r}$ a primitive $r$th root of unity. 
These groups are often called {\it colored permutation groups} and the elements are commonly refered to as {\it colored} or {\it indexed} permutations. 
We adopt the usual window notation, denoting the pair $(\pi, \epsilon)$ by $[\pi(1)^{c_1} \ \pi(2)^{c_2} \ \cdots \ \pi(n)^{c_n}]$ where $\epsilon_j=\omega^{c_j}$. 
Additionally, we will use the notation $j^{c_j}$ and $(\omega^{c_j},j)$ to denote elements of $\{\omega^0,\omega^1,\ldots,\omega^{r-1}\}\times [n]$. 

Elements $(\pi,\epsilon)\in\Z_r\wr S_n$ can be identified as a permutation matrix for $\pi$ where the 1 in position $(\pi(i),i)$ is replaced with $\epsilon_i$. 
The algebraic structure of $\Z_r\wr S_n$ is described by matrix multiplication where entry-by-entry multiplication of the nonzero entries is given by the group operation of $\Z_r$. This means that given $(\pi,\epsilon), (\pi',\epsilon')\in\Z_r\wr S_n$
	\[
	(\pi',\epsilon')\circ (\pi,\epsilon)=(\pi'\circ\pi, (\epsilon_1 \cdot\epsilon'_{\pi(1)}, \ldots, \epsilon_n\cdot \epsilon'_{\pi(n)})),
	\]
or represented in window notation we have 
	\[
	[\pi'(1)^{c'_1} \  \cdots \ \pi'(n)^{c'_n}]\circ [\pi(1)^{c_1} \  \cdots \ \pi(n)^{c_n}]=[\pi'\circ\pi(1)^{c_1+c'_{\pi(1)}} \  \cdots \ \pi'\circ\pi(n)^{c_n+c'_{\pi(n)}}]
	\]
where the addition is modulo $r$. A more explicit understanding of these wreath products may be found in \cite{Bagno,Bagno-Biagioli,B-B,P-S}.

To review one definition of descents for wreath products, we define a total order as follows.
Given $j^{c_j},k^{c_k}\in\{\omega^0,\omega^1,\cdots,\omega^{r-1}\}\times [n]$ we say that $j^{c_j}<k^{c_k}$ if $c_j>c_k$ or if $c_j=c_k$ and $j<k$ hold. 

\begin{definition}
Let $(\pi,\epsilon)\in\Z_r\wr S_n$.  The {\it type-A descent set} is defined to be 
	\[
	\Des_A(\pi,\epsilon):=\{i\in [n-1] \, : \, \pi_i^{c_i}>\pi_{i+1}^{c_{i+1}} \} 
	\]
and the {\it type-A descent statistic} is 
	\[
	\des_A(\pi,\epsilon):=\#\Des_A(\pi,\epsilon).
	\]
The {\it type-A major index} is 
	\[
	{\rm major}_A(\pi,\epsilon) :=\sum_{j\in\Des_A(\pi,\epsilon)} j
	\]		
\end{definition}
\begin{example}
Let $(\pi,\epsilon)=[2^1 \ 6^3 \ 4^3 \ 1^0 \ 5^2 \ 3^0]\in\Z_4 \wr S_6$. Then $\Des_A(\pi,\epsilon)=\{1,2,4\}$, $\des_A(\pi,\epsilon)=3$, and $\operatorname{major}_A(\pi,\epsilon)=7$.
\end{example}

We now review a different notion of descent statistics for $\Z_r\wr S_n$, namely the {\it negative} statistics. 

\begin{definition}
For an element $(\pi,\epsilon)\in\Z_r\wr S_n$, we define the {\it negative inverse multiset} as
	\[
	\NNeg(\pi,\epsilon):=\{\underbrace{i,i,\ldots,i}_{c_i \ {\rm times}} : i\in [n]\}.
	\]
	
The {\it negative descent multiset} is 
	\[
	\NDes(\pi,\epsilon):=\Des_A(\pi,\epsilon)\cup\NNeg((\pi,\epsilon)^{-1}).
	\]

The {\it negative descent statistic} is
	\[
	\ndes(\pi,\epsilon):=\# \NDes(\pi,\epsilon).
	\]

The {\it negative major index} is
	\[
	\nmajor(\pi, \epsilon):=\sum_{i\in \NDes(\pi,\epsilon)}i.
	\]
	
\end{definition} 
\begin{example}
If $(\pi,\epsilon)=[2^1 \ 6^3 \ 4^3 \ 1^0 \ 5^2 \ 3^0]\in\Z_4 \wr S_6$, then $(\pi,\epsilon)^{-1}=[4^0 \ 1^3 \ 6^0 \ 3^1 \ 5^2 \ 2^1]$ and hence $\NNeg((\pi,\epsilon)^{-1})=\{2,2,2,4,5,5,6\}$. Further,
	\[
	\NDes(\pi,\epsilon)=\{1,2,4\}\cup \{2,2,2,4,5,5,6\}=\{1,2,2,2,2,4,4,5,5,6\}
	\]
and thus $\ndes(\pi,\epsilon)=10$ and $\nmajor(\pi,\epsilon)=33$.
\end{example}

We will use the following representation for elements of $\Z_r\wr S_n$.

\begin{definition}
The \emph{increasing elements} of $\Z_r\wr S_n$, denoted $\mathcal{I}_{r,n}$, is the subset of elements satisfying $\des_A(\pi,\epsilon)=0$. 
\end{definition}  
It is a simple exercise to see that any element of $(\pi,\epsilon)\in\Z_r\wr S_n$ can be represented uniquely as 
	\[
	(\pi,\epsilon)=(\rho,\delta)\circ(\sigma,(1,1,\ldots,1))
	\]
for some $\sigma\in S_n$ and $(\rho,\delta)\in \mathcal{I}_{r,n}$. Subsequently, we have  that 
	\[
	\Z_r\wr S_n=\bigcup_{\sigma\in S_n}\mathcal{I}_{r,n}\sigma
	\]
where we use $\sigma$ in place of $(\sigma, (1,1,\ldots,1))$ for ease.

We also have the following observation.

\begin{proposition}{\cite[Proposition 5.11]{B-B}}\label{bijectionprop}
For $(\rho,\delta)\in \mathcal{I}_{r,n}$ and $\sigma\in S_n$, 
	\[
	\NNeg([(\rho,\delta)\sigma]^{-1})=\NNeg((\rho,\delta)^{-1}).
	\]
Further, each permutation $(\rho,\delta)\in \mathcal{I}_{r,n}$ is uniquely determined by $\NNeg((\rho,\delta)^{-1})$.	
\end{proposition}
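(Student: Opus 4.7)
The plan is to handle the two claims separately: the invariance of $\NNeg$ under right multiplication by $\sigma\in S_n$, and the unique reconstruction of $(\rho,\delta)\in\mathcal{I}_{r,n}$ from the multiset $\NNeg((\rho,\delta)^{-1})$.

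For the first claim, I would begin from the multiplication formula in $\Z_r\wr S_n$: writing $\delta_k=\omega^{d_k}$, the product $(\rho,\delta)\circ(\sigma,(1,\ldots,1))$ equals $(\rho\sigma,\delta')$ with $\delta'_i=\delta_{\sigma(i)}$. A short computation from the same formula gives an explicit description of the colors of any inverse $(\pi,\epsilon)^{-1}$, from which one reads off that $\NNeg((\pi,\epsilon)^{-1})$ is the multiset obtained by including, for each $i\in[n]$, the value $\pi(i)$ with multiplicity $(r-c_i)\bmod r$, where $\epsilon_i=\omega^{c_i}$. Applied to $(\rho,\delta)\sigma$, this multiset equals $\{\rho(\sigma(i))\text{ with multiplicity }(r-d_{\sigma(i)})\bmod r:i\in[n]\}$; since $\sigma$ is a bijection of $[n]$, the substitution $k=\sigma(i)$ converts it to $\{\rho(k)\text{ with multiplicity }(r-d_k)\bmod r:k\in[n]\}=\NNeg((\rho,\delta)^{-1})$.

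For the second claim, I would use the increasing condition to reconstruct $(\rho,\delta)\in\mathcal{I}_{r,n}$ from its $\NNeg$-datum. The preceding formula shows that knowing $\NNeg((\rho,\delta)^{-1})$ is equivalent to knowing, for each $j\in[n]$, the color $c_i$ of the unique $i$ with $\rho(i)=j$ (via $c_i=(r-m_j)\bmod r$, where $m_j$ is the multiplicity of $j$ in the multiset). The condition $\des_A(\rho,\delta)=0$ says that $\rho(1)^{c_1}\rho(2)^{c_2}\cdots\rho(n)^{c_n}$ is increasing in the stated total order, and because larger color is smaller, this forces $c_1\geq c_2\geq\cdots\geq c_n$ with $\rho(i)$ increasing within each constant-color block. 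Thus the window is recovered by listing the $j$'s of color $r-1$ in increasing order, then those of color $r-2$, and so on down to color $0$, which determines $(\rho,\delta)$ uniquely.

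The main subtlety will be carefully tracking the convention that larger color is smaller in the total order on $\{\omega^0,\ldots,\omega^{r-1}\}\times[n]$; this is responsible for the inversion $c\leftrightarrow(r-c)\bmod r$ relating the colors of $(\rho,\delta)$ to the multiplicities appearing in $\NNeg((\rho,\delta)^{-1})$. Beyond this bookkeeping, both parts are direct consequences of the multiplication formula together with the bijectivity of $\sigma$, with no further combinatorial complications expected.
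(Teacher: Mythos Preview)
Your argument is correct. Note, however, that the paper does not give its own proof of this proposition: it is quoted verbatim from \cite[Proposition~5.11]{B-B} and left unproved here, so there is no in-paper proof to compare against. Your approach---computing $(\rho,\delta)\circ(\sigma,(1,\ldots,1))=(\rho\sigma,(\delta_{\sigma(1)},\ldots,\delta_{\sigma(n)}))$, deriving that $\NNeg((\pi,\epsilon)^{-1})$ is the multiset $\{\pi(i)\text{ with multiplicity }(r-c_i)\bmod r:i\in[n]\}$, and then invoking bijectivity of $\sigma$---is exactly the natural direct verification, and your reconstruction in the second part via the weakly decreasing color sequence forced by $\des_A=0$ is also the standard argument. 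The bookkeeping you flag (the color-inversion $c\leftrightarrow(r-c)\bmod r$ coming from the inverse) is indeed the only place to be careful, and you have handled it correctly.
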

\begin{remark}\label{notation}
We will often denote $(\pi,\epsilon)\in \Z_r\wr S_n$ by the pair $(\sigma, X)$ where $\sigma \in S_n$ satisfies $(\rho,\delta)\sigma=(\pi,\epsilon)$ with $(\rho,\delta)\in \mathcal{I}_{r,n}$ and $X=\NNeg((\pi,\epsilon)^{-1})$.
This establishes a bijective correspondence between elements of $\Z_r\wr S_n$ and pairs $(\sigma, X)$ with $\sigma \in S_n$ and $X$ a multiset of elements of $[n]$ in which each element appears with multiplicity strictly less than $r$. For convenience of notation, we will write $(\sigma,X)\in \Z_r\wr S_n$ when this interpretation is preferred.
\end{remark}

\begin{example}
Let $(\pi,\epsilon)=[2^1 \ 6^3 \ 4^3 \ 1^0 \ 5^2 \ 3^0]\in\Z_4 \wr S_6$ and consider $(\rho,\delta)=[4^3 \ 6^3 \ 5^2 \ 2^1 \ 1^0 \ 3^0]\in\mathcal{I}_{4,6}$ and $\sigma=421536\in S_6.$ Note that $(\pi,\epsilon)=(\rho,\delta)\circ (\sigma,(1,\ldots,1))$ as 
	\[
	[2^1 \ 6^3 \ 4^3 \ 1^0 \ 5^2 \ 3^0]=[4^3 \ 6^3 \ 5^2 \ 2^1 \ 1^0 \ 3^0] \circ [4^0 \ 2^0 \ 1^0 \ 5^0 \ 3^0 \ 6^0].
	\]
Moreover, $(\rho,\delta)^{-1}=[5^0 \ 4^3 \ 6^0 \ 1^1 \ 3^2 \ 2^1]$ and  $\NNeg((\rho,\delta)^{-1})=\{2,2,2,4,5,5,6\}.$ Therefore,\\ $\NNeg((\rho,\delta)^{-1})=\NNeg(((\rho,\delta)\sigma)^{-1})=\NNeg((\pi,\epsilon)^{-1})$. Thus, $(\pi,\epsilon)$ corresponds to the pair $(\sigma,X)=(421536,\{2,2,2,4,5,5,6\})$,
\end{example}


\section{$\Z_r\wr S_n$-quotient algebras of $R_n$ and descent bases}\label{quotients}

For convenience, we will view $R_n\cong T_n/I_n$ as the quotient of a polynomial ring by the toric ideal $I_n$. 
First consider the $S_n$ case. 
We define an $S_n$ action on $T_n$ given as $S_n\times T_n\to T_n$ defined on the variables by $(\pi, z_{A})\mapsto z_{\pi(A)}=z_{\{\pi(a_1),\ldots,\pi(a_k) \}}$ where $A=\{a_1,\ldots,a_k\}$. 
Note that this action passes to $R_n\cong T_n/I_n$, where it corresponds to the usual action of $S_n$ on $\{x_1,\ldots,x_n\}$ of permutation of the variables because
	\[
	z_{\{a_1,a_2,\ldots,a_k\}}\mapsto x_{a_1}x_{a_2}\cdots x_{a_k}
	\] 
and
	\[
	z_{\{\pi(a_1),\ldots,\pi(a_k) \}}\mapsto x_{\pi(a_1)}x_{\pi(a_2)}\cdots x_{\pi(a_k)}
	\]
which is the usual permutation of variables action of $S_n$.	
We consider the following ideal of elements which are invariant under this action:
	\[
	invar(1,n):=\left\langle \hat{e}_k:= \sum_{|A|=k}z_A \ | \ \mbox{for all} \ 0\leq k \leq n \right\rangle.
	\] 
The elements $\hat{e}_k$ are the $T_n$--analogue to the usual elementary symmetric functions $e_k$ in the polynomial ring on $n$ variables. 
Notice that this ideal cannot be the full ideal of invariants for this action on $T_n$ because there must be $2^n$ algebraically independent invariants \cite[Proposition 2.1.1]{Sturmfels}.
However, the generators are indeed invariant and this is the appropriate ideal for our purposes. 
We say the $S_n$ quotient algebra of $R_n$ is $R_n/\overline{invar(1,n)}$, where $\overline{invar(1,n)}$ is the image of $invar(1,n)$ in the quotient $T_n/I_n$. 
For convenience, we will consider the ring $T_n/J_{1,n}$ where $J_{1,n}:=invar(1,n)+I_n$, as it is a straightforward exercise in algebra to show that $T_n/J_{1,n}\cong R_n/\overline{invar(1,n)}$.

Next, we consider $\Z_r\wr S_n$ for $r\geq 2$. 
Consider the action $\Z_r\wr S_n \times T_n\to T_n$ defined on the variables by $\left((\pi,\epsilon), z_{A}\right)\mapsto (\prod_{i\in A}\epsilon_i) \cdot z_{\pi(A)}=(\prod_{i\in A}\epsilon_i) \cdot z_{\{\pi(a_1),\cdots,\pi(a_k) \}}$ where $A=\{a_1,\ldots,a_k\}$. We consider an ideal generated by invariant elements of this action:
	\[
	invar(r,n):=\left\langle z_\emptyset, \hat{e}_{r,k}:= \sum_{|A|=k}z_A^r \ | \ \mbox{for all} \ 1\leq k \leq n \right\rangle.
	\]
This is consistent with the above in the $r=1$ case. 
This ideal also does not contain all of the invariants of $T_n$ under this action, but the ideal is the appropriate choice of invariant generators for our scenario.
We say the $\Z_r\wr S_n$ quotient algebra of $R_n$ is $R_n/\overline{invar(r,n)}$, and we will consider the ring $T_n/J_{r,n}$ where $J_{r,n}:=invar(r,n)+I_n$, as we have $T_n/J_{r,n}\cong R_n/\overline{invar(r,n)}$.  

Now, we will define descent bases for our quotients.
First consider $T_n/J_{1,n}$. 
We wish to construct a basis based on the descent sets of $S_n$ that is analogous to the Garsia-Stanton descent basis. 
The Garsia-Stanton descent basis is a basis for the $S_n$--coinvariant algebra $\C[x_1,\ldots,x_n]/\mathscr{I}_n$ with coset representatives   
	\[
	a_\pi=\prod_{j\in \Des(\pi)}x_{\pi(1)}\cdots x_{\pi(j)}
	\]
for all $\pi\in S_n$, where the ideal $\mathscr{I}_n=\langle e_1(x_1,\ldots,x_n), \ldots ,e_n(x_1,\ldots,x_n)\rangle$ is generated the the elementary symmetric functions $e_i(x_1,\ldots,x_n)=\sum_{a_1<\cdots<a_i}x_{a_1}\cdots x_{a_i}$.
Garsia and Stanton originally showed this was a basis in \cite{GarsiaStanton} using the theory of Stanley-Reisner rings. 
In \cite{A-B-R}, Adin, Brenti, and Roichman provide another proof of this result and use the basis heavily in their proof of the Euler-Mahonian identity for $S_n$. 
We introduce an analogue of the Garsia-Stanton basis for $T_n/J_{1,n}$, which is
	\[
	\api:=\prod_{j\in\Des(\pi)} z_{\{\pi(1),\pi(2),\cdots 	\pi(j)\}}
	\]
for all $\pi\in S_n$. 
\begin{example}
Let $\pi=421536\in S_6$. Since $\Des(\pi)=\{1,2,4\}$, we have
	\[
	a_\pi=z_{\{4\}} z_{\{2,4\}}z_{\{1,2,4,5\}}
	\]
\end{example}

Because of the correspondence given in Theorem~\ref{isomorphism}, in this paper we will refer to the set $\{\api:\pi\in S_n\}$ as the \emph{Garsia-Stanton basis}.
Using Gr\"obner basis arguments in Section~\ref{groebner}, we will show that this is indeed a basis for $T_n/J_{1,n}$.

We can generalize this to a basis of $T_n/J_{r,n}$ for $r\geq 2$.
\begin{definition}
The \emph{negative descent basis} of $T_n/J_{r,n}$ consists of the elements
	\[
	b^r_{(\sigma,X)}:=\asigma\cdot \prod_{j\in X}z_{\{\sigma(1),\sigma(2),\cdots,\sigma(j)\}}
	\]
for all $\sigma\in S_n$ and $X$ a multiset of $[n]$ where no element has multiplicity greater than $r-1$. 
\end{definition}
\begin{example}\label{ex:3.3}
Let $(\sigma,X)=(421536,\{2,2,2,4,5,5,6\})$ corresponding to $(\pi,\epsilon)=[2^1 \ 6^3 \ 4^3 \ 1^0 \ 5^2 \ 3^0]\in\Z_4 \wr S_6$. Then

	\[
	\begin{array}{rcl}
	b_{(\sigma,X)} & = & \displaystyle z_{\{4\}} z_{\{2,4\}}z_{\{1,2,4,5\}} \cdot \left( z_{\{2,4\}}\right)^3\cdot\left( z_{\{1,2,4,5\}}\right)\cdot\left(z_{\{1,2,3,4,5\}} \right)^2\cdot \left(z_{\{1,2,3,4,5,6\}} \right)\medskip \\
	& = & \displaystyle z_{\{4\}} z_{\{2,4\}}^4 z_{\{1,2,4,5\}}^2 z_{\{1,2,3,4,5\}}^2  z_{\{1,2,3,4,5,6\}}\\
	\end{array}
	\]

\end{example}

We will show that this is a basis in Section~\ref{groebner}.
It follows from Remark~\ref{notation} that if $(\sigma,X)$ corresponds to $(\rho,\epsilon)\in \Z_r\wr S_n$, then $\NNeg((\rho,\epsilon)^{-1})=X$ and $\Des_A(\rho,\epsilon)=\Des(\sigma)$. 
So, elements of this basis correspond to $\NDes$ sets of $\Z_r \wr S_n$, hence the name ``negative descent basis.'' 
It is important to observe that this is distinct from the basis developed by R. Adin, F. Brenti, and Y. Roichman \cite{A-B-R} for the hyperoctohedral group $B_n\cong \Z_2\wr S_n$, as their basis related to the \emph{flag descent sets}.


\section{Descent bases via Gr\"obner bases for $J_{r,n}$}\label{groebner}

Our goal in this section is to prove the following theorem by finding a Gr\"obner basis for the ideal $J_{r,n}$.

\begin{theorem}\label{basistheorem}
For $r\geq 2$, $\{b^r_{(\sigma,X)}\, : \, (\sigma,X)\in\Z_r\wr S_n\}$ is a basis of $T_n/J_{r,n}$.
When $r=1$, $\{\api \, : \, \pi\in S_n\}$ is a basis of $T_n/J_{1,n}$.
\end{theorem}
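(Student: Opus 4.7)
The plan is to exhibit an explicit Gr\"obner basis for $J_{r,n}$ whose standard monomials are precisely the proposed descent basis elements, and then invoke a dimension count to conclude. I will fix a term order on $T_n$ with two key properties: (a) for any incomparable subsets $A, B \subseteq [n]$, the leading monomial of the binomial $z_A z_B - z_{A \cap B} z_{A \cup B}$ is $z_A z_B$, and (b) for each $1 \leq k \leq n$, the leading monomial of $\hat{e}_{r,k} = \sum_{|A|=k} z_A^r$ is $z_{\{1, 2, \ldots, k\}}^r$. One such order is graded reverse lexicographic with the variables ranked first by $|A|$ (smaller size corresponding to a larger variable) and then within each rank by reverse colex, so that $z_{\{1,\ldots,k\}}$ is the largest variable among $k$-subsets. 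Property (a) is the standard Hibi-type conclusion that the binomial generators form a Gr\"obner basis of $I_n$ and that the chain monomials are exactly the standard monomials modulo $I_n$.

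With this setup I will first verify that each $b^r_{(\sigma,X)}$ is a standard monomial for the ideal generated by $\{z_A z_B : A, B \text{ incomparable}\} \cup \{z_\emptyset\} \cup \{z_{\{1,\ldots,k\}}^r : 1 \leq k \leq n\}$. The chain property is immediate since every factor is an initial segment of $\sigma$, and $z_\emptyset$ never appears. The critical observation is that $z_{\{1,\ldots,k\}}^r$ does not divide any $b^r_{(\sigma,X)}$: if $\{\sigma(1),\ldots,\sigma(k)\} = \{1,\ldots,k\}$ then $\sigma(k) \leq k < k+1 \leq \sigma(k+1)$, so $k \notin \Des(\sigma)$, and the exponent of $z_{\{1,\ldots,k\}}$ in $b^r_{(\sigma,X)}$ is just the multiplicity of $k$ in $X$, which is at most $r-1 < r$. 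Injectivity of $(\sigma, X) \mapsto b^r_{(\sigma,X)}$ follows from Remark \ref{notation}: the support of the monomial determines the chain and hence, via the canonical flag extension that refines the chain by inserting the smallest available element in each gap, determines $\sigma$; the exponents then recover $X$ via $\text{mult}_X(j) = e_j - [j \in \Des(\sigma)]$, where $e_j$ denotes the exponent at $\{\sigma(1),\ldots,\sigma(j)\}$.

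Linear independence in $T_n/J_{r,n}$ then follows because the $b^r_{(\sigma,X)}$ are distinct standard monomials. For spanning, I would argue by induction on the term order that every monomial in $T_n$ reduces modulo $J_{r,n}$ to a $\C$-linear combination of $b^r_{(\sigma,X)}$: non-chain monomials reduce via the toric binomials; chain monomials containing $z_\emptyset$ vanish using the generator $z_\emptyset$; chain monomials divisible by $z_{\{1,\ldots,k\}}^r$ reduce via the substitution $z_{\{1,\ldots,k\}}^r \equiv -\sum_{|A|=k,\, A\neq\{1,\ldots,k\}} z_A^r \pmod{J_{r,n}}$, which produces strictly smaller monomials in the chosen order. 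The residual task is to confirm termination and that every reduced expression is already a combination of $b^r_{(\sigma,X)}$'s; this may require additional S-polynomial reductions between the $\hat{e}_{r,k}$ and the toric binomials to eliminate chain monomials whose boundary sets are ``rigid'' against a descent at the required position.

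The main obstacle is verifying that the S-polynomial reductions produce only relations whose leading terms do not divide any $b^r_{(\sigma,X)}$, so that no additional initial-ideal elements spoil the standardness of the proposed basis. A cleaner closing argument, which I would pursue as an alternative, is a dimension count: there are precisely $r^n \cdot n! = |\Z_r \wr S_n|$ elements $b^r_{(\sigma,X)}$, and the multigraded Hilbert series of $T_n/J_{r,n}$ developed in the following section of the paper confirms $\dim_{\C} T_n/J_{r,n} = r^n \cdot n!$. Linear independence combined with this matching count forces the $b^r_{(\sigma,X)}$ to form a basis, bypassing the need for complete S-polynomial analysis. The $r=1$ case is an immediate specialization: each $\hat{e}_k$ is linear with leading term $z_{\{1,\ldots,k\}}$ of exponent one, so standard monomials are squarefree chain monomials avoiding every $\{1,\ldots,k\}$, recovering the $n!$ Garsia-Stanton elements.
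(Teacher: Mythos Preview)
Your proposal contains a genuine gap at the linear-independence step. You verify that each $b^r_{(\sigma,X)}$ avoids divisibility by the leading terms of the \emph{original generators} of $J_{r,n}$ (the incomparable-pair binomials, $z_\emptyset$, and each $\hat{e}_{r,k}$). But being a standard monomial for the monomial ideal $M=\langle z_Az_B:A,B\text{ incomparable}\rangle+\langle z_\emptyset\rangle+\langle z_{[k]}^r:1\le k\le n\rangle$ does \emph{not} imply linear independence in $T_n/J_{r,n}$; Macaulay's theorem requires standardness with respect to the full initial ideal $\LT(J_{r,n})$, and in general $M\subsetneq\LT(J_{r,n})$. Concretely, $M$ places no bound on the exponent of $z_A$ when $A\neq[k]$, so the set of $M$-standard monomials is infinite and far larger than the $r^n n!$ elements you need; your argument gives no reason why the $b^r_{(\sigma,X)}$, rather than some other $M$-standard monomials, survive in the quotient. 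The same issue infects your $r=1$ remark: avoiding $z_\emptyset$ and the $z_{[k]}$ in a chain monomial does \emph{not} force squarefreeness (e.g.\ $z_{\{2\}}^2$ is $M$-standard for $n=2$, $r=1$, yet lies in $J_{1,2}$).

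The dimension count you invoke is correct and is exactly what the paper uses, but it cannot replace the missing step: knowing $\dim_\C T_n/J_{r,n}=r^n n!$ only finishes the proof once you have \emph{either} spanning \emph{or} linear independence, and you have established neither. Your spanning sketch reduces every monomial to an $M$-standard chain monomial, but as you yourself note, the residual task of reducing further to the $b^r_{(\sigma,X)}$ remains. The paper closes precisely this gap: Proposition~\ref{LT} identifies a larger monomial ideal $N_{r,n}\supsetneq M$ (seven generator types, including for instance $z_A^{r+1}$ for $A\neq[k]$ and certain three-set chain obstructions) whose standard monomials are exactly the $b^r_{(\sigma,X)}$, and Theorem~\ref{gb} produces, via explicit $S$-polynomial computations, Gr\"obner basis elements of $J_{r,n}$ with every generator of $N_{r,n}$ as a leading term. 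Only then does the dimension count certify that no further leading terms arise and that the $b^r_{(\sigma,X)}$ form a basis. Those $S$-polynomial computations are the substantive content you are missing.
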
 

Before proving this theorem, we briefly review Gr\"obner bases. 
For a detailed reference on the theory and computation of Gr\"obner bases, we invite the reader to consult \cite{CoxLO,EneHerzog}.
 Consider  the polynomial ring $S=\C[x_1,\ldots,x_n]$.
Recall that a \emph{term order} $<_{\rm mon}$ on $S$ is a relation on $\Z_{\geq 0}^n$ which is a total ordering, a well ordering, and satisfies the condition that if $\alpha<_{\rm mon} \beta$ and $\gamma\in\Z_{\geq 0}^n$, then $\alpha+\gamma <_{\rm mon} \beta+\gamma$. Given two monomials $m_1=\prod_{i=1}^n x_i^{\alpha_i}$ and $m_2=\prod_{i=1}^n x_i^{\beta_i}$, we say that $m_1<_{\rm mon} m_2$ if $(\alpha_1,\alpha_2,\ldots,\alpha)<_{\rm mon} (\beta_1,\beta_2,\ldots,\beta_n)$. 
Given $f\in S$, the \emph{leading monomial of} $f$, denoted $\operatorname{LM}(f)$, is the largest monomial of $f$ with respect to the term order  $<_{\rm mon}$.
For notation, we will denote monomials as $\x^\alpha:=\prod_{i=1}^nx_i^{\alpha_i}$ where $\alpha\in\Z_{\geq 0}^n$. 
The \emph{leading term} of $f$, denoted $\LT(f)$, is the leading monomial with its coefficient. 
Given an ideal $I\subset S$, $\LT(I)=\{c\x^a \, : \,\exists \ f\in I \ s.t. \ \LT(f)=c\x^a\}$ and $\langle \LT(I)\rangle$ is the ideal generated by elements of $\LT(I)$, which we call the \emph{leading term ideal} of $I$. 
A finite subset $G=\{g_1,\ldots,g_t\}$ of an ideal $I$ is called a \emph{Gr\"obner basis} for $I$ if
	\[
	\langle\LT(g_1), \ldots,\LT(g_t)\rangle =\langle\LT(I)\rangle.
	\]
Given a polynomial ideal $I\subset S$ and a fixed term order, we can algorithmically construct a Gr\"obner basis using a classical result known as \emph{Buchberger's Algorithm}. 
However, one can optimize this classical algorithm to be more efficient. 
Before stating an optimized version, we must introduce notation. Given two polynomials $f,g$, the \emph{$S$-polynomial} of $f$ and $g$ is 
	\[
	S(f,g)=\frac{\x^\gamma}{\LT(f)}\cdot f-\frac{\x^\gamma}{\LT(g)}\cdot g
	\]
where $\x^\gamma=\operatorname{lcm}(\operatorname{LM}(f),\operatorname{LM}(g))$. Given a polynomial $f$ and an ordered $s$-tuple of polynomials $F=(f_1,\cdots,f_s)$, let $\overline{f}^F$ denote the reminder of $f$ after division by each polynomial in $F$ performed in order. The reader should consult \cite{CoxLO} for a thorough discussion of multivariate polynomial division.	
   
\begin{algorithm}[Optimized Buchberger Algorithm]
Let $I=\langle f_1,\ldots,f_s\rangle\subset S$. 
Then a Gr\"obner basis for $I$ can be constructed in a finite number of steps as follows:\\
\noindent Input: $F=(f_1,\ldots,f_s)$\\
\noindent Output: $G$, a Gr\"obner basis for $I$\\
\indent Initial state: $B:=\{(i,j)\, :\, 1\leq i<j\leq s\}$; \ \ $G:=F$; \ \ $t:=s$\smallskip

\indent \textbf{WHILE} $B\neq \emptyset$ \textbf{DO}\\
\indent\indent\indent Select $(i,j)\in B$\\
\indent\indent\indent \textbf{IF} $\operatorname{lcm}(\LT(f_i),\LT(f_j))\neq \LT(f_i)\cdot\LT(f_j)$, \textbf{AND} $\operatorname{Criterion}(f_i,f_j,B)$ is false \textbf{THEN}\\
\indent\indent\indent\indent\indent $\mathcal{S}:=\overline{S(f_i,f_j)}^G$\\
\indent\indent\indent\indent\indent \textbf{IF} $\mathcal{S}\neq 0$ \textbf{THEN}\\
\indent\indent\indent\indent\indent\indent\indent $t:=t+1$ \ ; \ $f_t:=\mathcal{S}$ \ ; \ $G:=G\cup \{f_t\}$ \ ; \ $B:=B\cup\{(i,t) \, : \, 1\leq i\leq t-1\}$\\
\indent\indent\indent $B:=B-\{(i,j)\}$,\smallskip

\noindent where $\operatorname{Criterion}(f_i,f_j,B)$ is true provided that there is some $k\not\in\{i,j\}$ for which the pairs $[i,k]$ (i.e $(i,k)$ if $i<k$ or $(k,i)$ if $k<i$) and $[j,k]$ are NOT in $B$ and $\LT(f_k)$ divides $\operatorname{lcm}(\LT(f_i),\LT(f_j))$.
\end{algorithm}

Our motivation to compute a Gr\"obner basis for the ideal $J_{r,n}$ is the following theorem attributed to Macaulay.
\begin{theorem}[Macaulay, c.f.\cite{EneHerzog}]\label{Mac}
Let $<$ be a term order and let $I\subset S$ be an ideal. 
Then the monomials in $S$ which do not belong to $\langle\LT(I)\rangle$ form a $\C$-basis for $S/I$. 
\end{theorem}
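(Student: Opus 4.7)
The plan is to apply Macaulay's theorem (Theorem~\ref{Mac}) by exhibiting a Gr\"obner basis of $J_{r,n}$ whose set of standard monomials coincides with $\{b^r_{(\sigma,X)} : (\sigma,X) \in \Z_r \wr S_n\}$, and in the $r=1$ case with $\{\api : \pi \in S_n\}$. First I would fix a graded lexicographic term order on $T_n$ arising from a total order on subsets of $[n]$ for which $[k]$ is the largest subset of size $k$ and each Hibi binomial $z_A z_B - z_{A \cap B} z_{A \cup B}$ (with $A,B$ incomparable) has leading monomial $z_A z_B$. Under this choice $\operatorname{LM}(\hat e_{r,k}) = z_{[k]}^r$, and the Hibi binomials together with $z_\emptyset$ already form a Gr\"obner basis for $\langle z_\emptyset \rangle + I_n$ (the classical result for order-polytope algebras of antichains), so the initial leading terms $z_\emptyset$ and $z_A z_B$ (for incomparable pairs) are accounted for at the outset.

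The core of the proof is running Buchberger's algorithm on $\{z_\emptyset\} \cup \{z_A z_B - z_{A\cap B} z_{A\cup B}\} \cup \{\hat e_{r,k}\}$ and characterizing the resulting leading-term ideal. The new $S$-polynomials to track are those mixing some $\hat e_{r,k}$ either with a Hibi binomial or with another $\hat e_{r,\ell}$; after reduction, these should produce further Gr\"obner basis elements whose leading monomials are chain products $z_C^{r}\cdot \prod_i z_{C_i}^{e_i}$ in which the factors $z_C, z_{C_i}$ all come from a single chain of subsets. By induction on chain length one shows that after finitely many iterations every new $S$-polynomial reduces to zero, and that the final leading-term ideal is generated by $z_\emptyset$, by $z_A z_B$ for every incomparable pair, and by the chain monomials $\prod_i z_{A_{j_i}}^{q_i}$ whose exponent sequence violates the bound $q_i \le r$, together with the sharper bound $q_i \le r-1$ whenever the natural per-block-increasing permutation associated with the chain has no descent at position $j_i$.

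With this description of the leading-term ideal, the standard monomials are precisely the chain products $\prod_{i=1}^m z_{A_{j_i}}^{q_i}$ satisfying the above exponent bounds. Using Proposition~\ref{bijectionprop} together with Remark~\ref{notation}, one exhibits a bijection between such standard monomials and pairs $(\sigma,X) \in \Z_r \wr S_n$: take $\sigma$ to be the natural increasing-per-block permutation determined by the chain and let $X$ record multiplicity $q_i - 1$ at $j_i$ if $j_i \in \Des(\sigma)$ and multiplicity $q_i$ otherwise. Under this correspondence the standard monomial is exactly $b^r_{(\sigma,X)}$, and Macaulay's theorem finishes the proof. The $r=1$ case follows since the only multiset with multiplicities strictly less than $1$ is empty, whence $b^1_{(\sigma,\emptyset)} = \api$. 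The main obstacle will be the middle step: controlling Buchberger's algorithm carefully enough to match the claimed combinatorial description of the leading-term ideal, with no spurious new leading terms appearing and with each reduction bounding chain exponents precisely as prescribed by the descent structure.
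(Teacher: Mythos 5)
Your proposal does not prove the statement it is supposed to prove. The statement is Macaulay's theorem itself: for an arbitrary ideal $I\subset S$ and an arbitrary term order, the monomials outside $\langle\LT(I)\rangle$ (the ``standard monomials'') form a $\C$-basis of $S/I$. Your write-up instead \emph{invokes} this theorem as a black box and spends all of its effort constructing a Gr\"obner basis of $J_{r,n}$ and matching its standard monomials with the elements $b^r_{(\sigma,X)}$ --- that is the content of Proposition~\ref{LT}, Theorem~\ref{gb}, and Theorem~\ref{basistheorem}, which are separate results in the paper. As an argument for Theorem~\ref{Mac} this is circular: you cannot establish Macaulay's theorem by ``applying Macaulay's theorem.'' (For what it is worth, the paper does not prove Theorem~\ref{Mac} either; it is cited as a classical result from the reference \cite{EneHerzog}. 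But a blind proof attempt of this statement still has to supply the classical argument, not the downstream application.)

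What is actually needed is short and has nothing to do with $J_{r,n}$. \emph{Spanning:} given $f\in S$, reduce it modulo $I$ by Noetherian induction on $\operatorname{LM}(f)$ (term orders are well-orderings, so this terminates): if $\operatorname{LM}(f)\in\langle\LT(I)\rangle$, then there is $h\in I$ with $\LT(h)=\LT(f)$, and $f-h\equiv f \pmod{I}$ has strictly smaller leading monomial; if $\operatorname{LM}(f)\notin\langle\LT(I)\rangle$, set aside the leading term (a scalar multiple of a standard monomial) and recurse on the tail. Hence the residues of standard monomials span $S/I$. \emph{Independence:} if $f=\sum_i c_i m_i\in I$ is a nonzero linear combination of distinct standard monomials, then $\operatorname{LM}(f)$ equals some $m_j$, which would then lie in $\langle\LT(I)\rangle$, contradicting that $m_j$ is standard. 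Neither half of this argument appears anywhere in your proposal, so the proof of the stated theorem is entirely missing.
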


Determining a Gr\"obner basis for $J_{r,n}$ yields a useful description of $\langle\LT(J_{r,n})\rangle$. 
Thus, Theorem~\ref{basistheorem} is an immediate consequence of Theorem~\ref{Mac}, Proposition~\ref{LT}, and Theorem~\ref{gb} below.
 
\begin{proposition}\label{LT}
Fix $r\geq 1$ and $n\geq 1$.
Consider the monomial ideal $N_{r,n}$ in $T_n$ generated by the following elements:
	\begin{itemize}
	\item  $z_\emptyset$	
	\item $z_A^r$, where $A=[k]$ for all $1\leq k\leq n$
	\item $z_A^{r+1}$ where $A\neq [k]$ for any $0\leq k\leq n$
	\item $z_Az_B$  such that $A\not\subseteq B$ and $B\not\subseteq A$
	\item $z_A^rz_B$ where $A\neq [k]$ for any $0\leq k\leq n$, such that $A\subset B$ and $\min(B\setminus A)>\max(A)$
	\item $z_A z_B^r$ where $B\neq [k]$ for any $0\leq k\leq n$, such that $A\subset B$  and there is an $\ell$ with  $[\ell]\not\subset A$, $[\ell]\subset B$, and $B\setminus A\subset [\ell]$
	\item $z_{A_1}z_{A_{2}}^r z_{A_3}$, where $A_2\neq[k]$ for any $0\leq k\leq n$, such that $A_1\subset  A_{2}\subset A_3$ and $\max(A_{2}\setminus A_{1})<\min(A_3\setminus A_{2})$
	\end{itemize}
The monomials outside of this ideal are precisely the elements of the negative descent basis for $T_n/J_{r,n}$ (for $r=1$, this is the Garsia-Stanton basis described above).
\end{proposition}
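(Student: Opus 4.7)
The plan is to exhibit a bijection between the monomials of $T_n$ lying outside $N_{r,n}$ and the pairs $(\sigma,X)$ indexing the negative descent basis, under which $(\sigma,X)$ is sent to $b^r_{(\sigma,X)}$ itself. Expanding the definition, $b^r_{(\sigma,X)} = \prod_{i=1}^s z_{A_i}^{e_i}$ where $A_i = \{\sigma(1),\ldots,\sigma(k_i)\}$ ranges over the distinct values $k_1 < \cdots < k_s$ of $\Des(\sigma) \cup X$ and $e_i = [k_i \in \Des(\sigma)] + \mathrm{mult}_X(k_i)$ (using Iverson bracket notation). The support is automatically a strict chain avoiding $z_\emptyset$, handling the first and fourth bullets of $N_{r,n}$.

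Next I would verify that $b^r_{(\sigma,X)}$ avoids the remaining five bullets. Since $\mathrm{mult}_X(k_i) \le r - 1$, always $e_i \le r$, and if $A_i = [k_i]$ then $\sigma$ permutes $[k_i]$ internally, forcing $\sigma(k_i) \le k_i < \sigma(k_i+1)$ and hence $k_i \notin \Des(\sigma)$; this gives $e_i = \mathrm{mult}_X(k_i) \le r - 1$, dispatching the second and third bullets. For the three multi-variable bullets, any factor $z_{A_i}^r$ with $A_i \ne [k_i]$ must satisfy $k_i \in \Des(\sigma)$, providing the strict inequality $\sigma(k_i) > \sigma(k_i+1)$. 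In each of the three forbidden configurations the separation condition on the chain (namely $\min(B \setminus A) > \max(A)$, the existence of $\ell$ with $B \setminus A \subseteq [\ell] \subseteq B$ and $[\ell] \not\subseteq A$, or $\max(A_2 \setminus A_1) < \min(A_3 \setminus A_2)$) would force $\sigma(k_i) < \sigma(k_i+1)$, since $\sigma(k_i)$ lies on the lower side and $\sigma(k_i+1)$ on the higher side of whichever separation is claimed; this contradicts the descent.

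Conversely, given $m = \prod_{i=1}^s z_{A_i}^{e_i}$ with $A_1 \subsetneq \cdots \subsetneq A_s$ outside $N_{r,n}$, I would define $\sigma \in S_n$ by listing each block $A_i \setminus A_{i-1}$ in increasing order (with $A_0 := \emptyset$), appending $[n] \setminus A_s$ in increasing order when $A_s \ne [n]$; since the only possible descents lie at block boundaries, $\Des(\sigma) \subseteq \{k_1,\ldots,k_s\}$. Setting $\mathrm{mult}_X(k_i) := e_i - [k_i \in \Des(\sigma)]$ produces a candidate pair $(\sigma, X)$, and a direct computation shows $b^r_{(\sigma,X)} = m$. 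The two constructions are visibly mutually inverse: the support of $m$ determines the chain, the chain together with the increasing-block rule determines $\sigma$, and the exponents then determine $X$.

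The main obstacle is proving that the constructed $X$ is a valid multiset, i.e.\ that $\mathrm{mult}_X(k_i) \le r - 1$. The only possible failure is $A_i \ne [k_i]$ with $e_i = r$ and $k_i \notin \Des(\sigma)$, equivalently $\max(A_i \setminus A_{i-1}) < \min(A_{i+1} \setminus A_i)$ (with $A_0 = \emptyset$ and, when $i = s$ and $A_s \ne [n]$, $A_{s+1} := [n]$). I would derive a contradiction in each position: for $i = 1$ with $s \ge 2$, the $z_A^r z_B$ bullet applies to $z_{A_1}^r z_{A_2}$; for $1 < i < s$, the $z_{A_1} z_{A_2}^r z_{A_3}$ bullet applies to $z_{A_{i-1}} z_{A_i}^r z_{A_{i+1}}$; for $i = s \ge 2$ with $A_s \ne [n]$, the $z_A z_B^r$ bullet applies with $A = A_{s-1}$, $B = A_s$, $\ell = \min([n] \setminus A_s) - 1$, where $[\ell] \not\subseteq A_{s-1}$ because otherwise $A_s \setminus A_{s-1} \subseteq [\ell] \subseteq A_{s-1}$ would be empty; and the degenerate case $s = 1$ is immediate since $A_1 \ne [k_1]$ with $|A_1| = k_1$ forces $\max(A_1) > k_1 \ge \min([n] \setminus A_1)$, so $k_1 \in \Des(\sigma)$ automatically.
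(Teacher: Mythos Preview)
Your proposal is correct and follows essentially the same two-direction argument as the paper: verify bullet-by-bullet that each $b^r_{(\sigma,X)}$ avoids the generators of $N_{r,n}$, then reconstruct $(\sigma,X)$ from an arbitrary monomial outside $N_{r,n}$ by reading off the chain, listing blocks in increasing order, and setting the $X$-multiplicities to be the excess exponents. Your presentation is somewhat cleaner than the paper's (you treat all $r\ge 1$ uniformly and define $\mathrm{mult}_X(k_i)=e_i-[k_i\in\Des(\sigma)]$ directly rather than via the paper's intermediate $\tilde X$ and $m_{fail}$), and your case analysis for the bound $\mathrm{mult}_X(k_i)\le r-1$ is more explicit, but the underlying ideas are identical.
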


\begin{proof}
We will first show the argument for $r=1$, the Garsia-Stanton basis, then we will generalize the argument for $r\geq 2$. 
Assume unless otherwise stated that elements of sets are written in ascending order, e.g. $A=\{a_1,a_2,\ldots,a_\ell\}$ implies $a_1<a_2<\cdots<a_\ell$. 
First, note that the following observations imply that every monomial $\api$ is not divisible by any of the generators of $N_{1,n}$.
	\begin{itemize}
	\item $z_{\emptyset}$ clearly cannot divide $\api$ by construction.
	\item $z_{\{1,2,3,\ldots,k\}}$ cannot divide $\api$, as this would imply that there is a descent at the position $k$, but there is no element smaller than $k$ which has not already appeared.
	\item $z_A^2$ cannot divide $\api$, as by definition each set $A$ which arises from $\Des(\pi)$ must be unique. 
	\item By definition, if $z_Az_B$ is a factor of $\api$, it implies that $A\subset B$ or vice versa. 
	\item If $z_Az_B$ divides $\api$ with $A\subset B$ such that $A=\{a_1,a_2,\dots a_{\ell}\}$ and $B=A \cup \{b_1,\dots,b_k\}$, we must have that $b_1<a_{\ell}$ else there is no descent possible at position $\ell$. 
	\item If $z_Az_B$ divides $\api$ with $A\subset B$ such that $[\ell]\not\subset A$ and $[\ell]\subset B$, we must have some element  $x\in B\setminus A$ such that $x\not\in [\ell]$, else no descent could occur since $\pi(|B|)\in [\ell]$ and $[\ell]\subset\{\pi(1),\ldots,\pi(|B|)\}$.
	\item If $z_{A_1}z_{A_{2}}z_{A_3}$ divides $\api$ where $A_1\subset  A_{2}\subset A_3$ and $\max(A_{2}\setminus A_{1})<\min(A_3\setminus A_{2})$, then no descent could occur between set $A_2$ and $A_3$, i.e. in position $\pi(|A_2|)$.
	\end{itemize}
	
Suppose next that we have a monomial in $m\in T_n$, which is divisible by none of the generators of $N_{1,n}$. 
We claim that there exists some $\pi\in S_n$ such that $m=\api$; to prove this claim, first we write 
	\[
	m=z_{B_1}z_{B_2}\cdots z_{B_s}
	\]  
where $B_1\subset B_2 \subset \cdots \subset B_s$.
We denote $B_1=\{\beta_{1_1},\ldots, \beta_{1_{m_1}}\}$ and $B_i=B_{i-1}\cup \{\beta_{i_1},\ldots,\beta_{i_{m_i}}\}$ for all $1<i\leq s$.
Note that this union corresponds to the permutation  
	\[
	\pi=\beta_{1_1}\cdots \beta_{1_{m_1}} \beta_{2_1}\cdots \beta_{2_{m_2}}\cdots\cdots \beta_{s_1}\cdots \beta_{s_{m_s}} \gamma_1\cdots\gamma_t
	\]
where $\gamma_1<\gamma_2<\cdots<\gamma_t$ are the elements which do not appear in any $B_i$ set. 
Moreover, we have that  $\beta_{i_{m_i}}>\beta_{{i+1}_1}$ and $\beta_{s_{m_s}}>\gamma_1$ and  these will be the only such descents since $m$ is not divisible by any of the generators of $N_{r,n}$.
Hence, $m$ is a Garsia-Stanton descent element $\api$. 
(This argument is similar to standard $P$-partition arguments \cite[Lemma 3.15.3]{EC1}.)

Now suppose that $r\geq 2$.
By a similar argument to that just given, $b_{(\pi,X)}^r$ is not divisible by a monomial from among the generators of $N_{r,n}$, since:
	\begin{itemize}
	\item $z_{\emptyset}$ clearly cannot divide $b_{(\pi,X)}^r$ by construction.	
	\item $z_{[k]}^r$ cannot appear in $b_{(\pi,X)}^r$, as, since the greatest possible multiplicity of any element in $X$ is $r-1$, this would imply that there is a position $k$ descent in $\pi$ when all smaller elements than $\pi(k)$ have already appeared in $\pi$. 
	\item $z_A^{r+1}$ for $A\neq[k]$ cannot appear in $b_{(\pi,X)}^r$ as we only obtain a single $z_A$ from $\api$, and we can obtain at most $r-1$ copies of $z_A$ from the product over $X$.
Note that if $z_A^r$ appears in $b_{(\pi,X)}^r$, then one of the $z_A$ terms must have come from the product indexed by $\Des(\pi)$, and thus $|A|\in \Des(\pi)$.
	\item By definition, $z_Az_B$ a factor of $b_{(\pi,X)}^r$ implies $A\subseteq B$ or vice-versa.
	\item If $z_A^r z_B$ appears in $b_{(\pi,X)}^r$ where $A\subset B$ with $A=\{a_1,a_2,\dots a_{\ell}\}$ and $B=A \cup \{b_1,\dots,b_k\}$, it follows that $|A|\in\Des(\pi)$, thus we must have that $b_1<a_{\ell}$ else there is no descent occurring in $\pi$ in position $|A|$.
	\item If $z_Az_B^r$ appears in $b_{(\pi,X)}^r$ where $A\subset B$ with $[\ell]\not\subset A$ and $[\ell]\subset B$, then $|B|\in\Des(\pi)$. 
Hence, there must exist an element $x\in B\setminus A$ such that $x\not\in [\ell]$, else no descent can occur.
	\item If $z_{A_1}z_{A_{2}}^rz_{A_3}$ appears in $b_{(\pi,X)}^r$ such that $A_1\subset A_2 \subset A_3$ and $\max(A_{2}\setminus A_{1})<\min(A_3\setminus A_{2})$, then no descent can occur between set $A_2$ and $A_3$, i.e. in position $\pi(|A_2|)$, but the power of $r$ on $z_{A_{2}}^r$ forces that there is such a descent. Hence this divisibility is not possible.
	\end{itemize}	  

Suppose next that we have a monomial $m_r\in T_n$ that is divisible by none of the generators of $N_{r,n}$.
We claim that there exists some $\pi\in S_n$ and $X$ a multiset of $[n]$ with every element having multiplicity strictly less than $r$ such that $m_r=b_{(\pi,X)}^r$.
An example illustrating the following proof is given in Example~\ref{ex:Xtilde}.
To prove this claim, first we write 
	\[
	m_r=z_{B_1}^{b_1}z_{B_2}^{b_2}\cdots z_{B_s}^{b_s}
	\]
where we have $B_1\subset B_2 \subset\cdots\subset B_s$.
Note that $b_i\leq r$ if $B_i\neq [k]$ and $b_i\leq r-1$ if $B_i=[k]$.
As in the previous case, inductively define $B_i=B_{i-1}\cup \{\beta_{i_1},\ldots,\beta_{i_{m_i}}\}$.
Construct a new monomial
	\[
	m_r'=\left\{\begin{array}{ll}z_{B_1}z_{B_2}\cdots z_{B_s} & \text{if } B_s\neq [n] \\ z_{B_1}z_{B_2}\cdots z_{B_{s-1}} & \text{if } B_s=[n] \end{array}\right.
	\]  
and the set 
	\[
	\tilde{X}=\left\{ \begin{array}{ll}\{\underbrace{c_1,\cdots, c_1}_{b_1-1 \ {\rm times}}, \underbrace{c_2,\cdots, c_2}_{b_2-1 \ {\rm times}},\cdots, \underbrace{c_s,\cdots,c_s}_{b_s-1 \ {\rm times}}\}  & \text{if } B_s\neq [n] \\
\{\underbrace{c_1,\cdots, c_1}_{b_1-1 \ {\rm times}}, \underbrace{c_2,\cdots, c_2}_{b_2-1 \ {\rm times}},\cdots, \underbrace{c_{s-1},\cdots,c_{s-1}}_{b_{s-1}-1 \ {\rm times}}, \underbrace{c_s,\cdots,c_s}_{b_s \ {\rm times}}\} & \text{if } B_s=[n]  \end{array}
 \right.
	\]
where $c_i=|B_i|$.
We associate to $m_r'$ the permutation
	\[
	\pi=\beta_{1_1}\cdots \beta_{1_{m_1}} \beta_{2_1}\cdots \beta_{2_{m_2}}\cdots\cdots \beta_{s_1}\cdots \beta_{s_{m_s}} \gamma_1\cdots\gamma_t
	\]
where $\gamma_1<\gamma_2<\cdots<\gamma_t$ are any elements which do not appear in any $B_i$ set. 
Since the $\beta$-values within each $B_i$ are increasing, the only possible descents occur between $\beta_{i_{m_i}}$ and $\beta_{{i+1}_1}$. 
If we have $\beta_{i_{m_i}} > \beta_{{i+1}_1}$, then we have a descent and we do nothing. 
(Note that the final three types of generators of $N_{r,n}$ force a descent to occur if $b_i$ takes on a maximal value of $r$ or $r-1$, showing that all seven of the types of generators of $N_{r,n}$ are required for this argument to hold.)
If we have 
\begin{equation}\label{eqn:nodes}
\beta_{i_{m_i}} < \beta_{{i+1}_1}
\end{equation}
 then there is no descent.
Let $m_{fail}$ be the product of $z_{B_i}$ over all the $i$ values such that~\eqref{eqn:nodes} holds and define
	\[
	\widetilde{m_r}:=m_r'/m_{fail} \, .
	\]
We have that $\api=\widetilde{m_r}$ by our argument in the $r=1$ case. 
Moreover, we set $X:=\tilde{X}\cup \{c_i: {\rm where } \ \beta_{i_{m_i}} < \beta_{{i+1}_1}\}$, where as before $c_i=|B_i|$. 
With this choice of permutation and multiset we obtain $m=b^r_{(\pi,X)}$.	
\end{proof}

\begin{example}\label{ex:Xtilde}
Let $r=4$ and $n=6$ and use the notation from the preceding proof.
Consider the monomial \
\[
z_{\{4\}} z_{\{2,4\}}^4 z_{\{1,2,4,5\}}^2 z_{\{1,2,3,4,5\}}^2  z_{\{1,2,3,4,5,6\}} \, .
\]
Thus, $B_2=\{2,4\}$, $B_5=\{1,2,3,4,5,6\}$, and so on.
We have that $m_4'=z_{\{4\}} z_{\{2,4\}} z_{\{1,2,4,5\}} z_{\{1,2,3,4,5\}}$ since $B_5=\{1,2,3,4,5,6\}$.
We have that $\widetilde{X}=\{2,2,2,4,5,6\}$, where the $6$ is included since $B_5=\{1,2,3,4,5,6\}$.
In this case, the permutation $\pi=421536$, and $m_{fail}=z_{\{1,2,3,4,5\}}$.
Thus, we have that
\[
\widetilde{m_4}=z_{\{4\}} z_{\{2,4\}} z_{\{1,2,4,5\}} = \api
\]
and
\[
X=\widetilde{X}\cup \{5\} = \{2,2,2,4,5,5,6\} \, .
\]
It is straightforward to check that 
\[
b^4_{(421536,\{2,2,2,4,5,5,6\})}=z_{\{4\}} z_{\{2,4\}}^4 z_{\{1,2,4,5\}}^2 z_{\{1,2,3,4,5\}}^2  z_{\{1,2,3,4,5,6\}} 
\]
as desired.
Note that here we have recovered the correspondence given in Example~\ref{ex:3.3}.
\end{example}

\begin{definition}
Given two sets $A$ and $B$ such that $|A|=|B|=k$, we say that $A$ is \emph{lexicographically before} $B$ if there exists $i\in A$ such that $i\not\in B$ and given any $j\in B$ such that $j<i$ we have $j\in A$.
\end{definition}
For example, the  ordering of $3$-subsets of the $5$-set would be 
${1,2,3}<{1,2,4}<{1,2,5}<{1,3,4}<{1,3,5}<{1,4,5}<{2,3,4}<{2,3,5}<{2,4,5}<{3,4,5}$.
Our next step is to prove that the monomials listed in Proposition~\ref{LT} arise as leading terms of $J_{r,n}$ when the following monomial term order is imposed on $T_n$.

\begin{definition}
Give the variables of $T_n$ the linear order $z_A>z_B$ if $|A|<|B|$ or if $|A|=|B|$ and $A$ is lexicographically before $B$.
With respect to this ordering of variables, endow $T_n$ with the graded reverse lexicographic (or \emph{grevlex}) term order.
In this setting, grevlex order is as follows.
Let $(\alpha_A)_{A\subseteq [n]}$ and $(\beta_A)_{A\subseteq [n]}$ be vectors in $\Z_{\geq 0}^{2^n}$ with entries totally ordered by setting the $A$-th coordinate to be larger than the $B$-th coordinate if and only if $z_A>z_B$.
For two monomials in $T_n$, we have 
\[
\prod_{A\subseteq [n]}z_A^{\alpha_A} >_{grevlex} \prod_{A\subseteq [n]}z_A^{\beta_A}
\]
 if either (1) $\sum_{A\subseteq [n]}\alpha_A > \sum_{A\subseteq [n]}\beta_A$ or (2) $\sum_{A\subseteq [n]}\alpha_A = \sum_{A\subseteq [n]}\beta_A$ and in $(\alpha_A-\beta_A)_{A\subseteq [n]}$ the right most non-zero entry is negative.
\end{definition}

\begin{example}
The variables in $T_3$ are ordered as follows:
\[
z_\emptyset > z_{\{1\}} > z_{\{2\}} > z_{\{3\}} > z_{\{1,2\}} > z_{\{1,3\}} > z_{\{2,3\}} > z_{\{1,2,3\}}
\]
We have that
\[
z_{\{2\}}^4>z_\emptyset z_{\{1\}}^2 z_{\{1,2\}}
\]
since the exponent vectors for these monomials with respect to the linear order of the variables above are $(0,0,4,0,0,0,0,0)$ and $(1,2,0,0,1,0,0,0)$, hence we have
\[
(0,0,4,0,0,0,0,0)-(1,2,0,0,1,0,0,0)=(-1,-2,4,0,-1,0,0,0)
\]
with negative right-most non-zero entry.
\end{example}

We will need the following definition for the proof of Theorem~\ref{gb}.
\begin{definition}
We call a pair of subsets $A$ and $B$ such that $A\nsubseteq B$ and $B\nsubseteq A$ a \emph{Sperner 2-pair}.
\end{definition}

\begin{theorem}\label{gb}
There exists a Gr\"{o}bner basis $G_{r,n}$ of $J_{r,n}$ for which the ideal generated by $\LT(G_{r,n})$ is the ideal $N_{r,n}$ generated by terms of the form listed in Proposition~\ref{LT}. 
\end{theorem}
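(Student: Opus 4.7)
The plan is to run the Optimized Buchberger Algorithm on the natural generating set
\[
F = \{z_\emptyset\} \cup \{\hat{e}_{r,k} : 1 \leq k \leq n\} \cup \{z_A z_B - z_{A\cap B} z_{A\cup B} : (A,B)\text{ a Sperner 2-pair}\}
\]
of $J_{r,n}$, and to show that the resulting basis $G_{r,n}$ has $\langle \LT(G_{r,n}) \rangle = N_{r,n}$. First, I would record the leading terms of these initial generators under the grevlex order: $\LT(z_\emptyset) = z_\emptyset$; since $[k]$ is the lex-earliest $k$-subset, $\LT(\hat{e}_{r,k}) = z_{[k]}^r$; and since $|A\cap B| < |A|,|B| < |A\cup B|$, the variable $z_{A\cup B}$ is the smallest in the linear order among those appearing, so the rightmost non-zero entry of the exponent vector difference sits at position $A\cup B$ with value $-1$, giving $\LT(z_A z_B - z_{A\cap B} z_{A\cup B}) = z_A z_B$. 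These already realize types 1, 2, and 4 of Proposition~\ref{LT}.

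Next I would exhibit explicit elements of $J_{r,n}$ whose leading terms realize types 3, 5, 6, and 7, thereby establishing $N_{r,n} \subseteq \langle \LT(J_{r,n}) \rangle$. For type 3, begin with $z_A \cdot \hat{e}_{r,|A|} = z_A^{r+1} + \sum_{B \neq A,\,|B|=|A|} z_A z_B^r$ and use the Sperner binomials to rewrite each $z_A z_B^r$ as $z_B^{r-1} z_{A\cap B} z_{A\cup B}$ modulo $I_n$; since $z_{A\cup B}$ is smaller than $z_A$ in the variable order, each rewritten term lies strictly below $z_A^{r+1}$ in grevlex, so when $A \neq [|A|]$ the leading term of the reduced element is $z_A^{r+1}$. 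Types 5, 6, and 7 arise analogously by multiplying $\hat{e}_{r,|A|}$ (respectively $\hat{e}_{r,|A_2|}$) by suitable extra variables and reducing modulo the Sperner binomials. The precise combinatorial constraints in Proposition~\ref{LT} — $\min(B\setminus A)>\max A$ for type 5, existence of the witnessing $\ell$ for type 6, and $\max(A_2\setminus A_1)<\min(A_3\setminus A_2)$ for type 7 — are exactly the conditions under which no further Sperner reduction can shrink the resulting leading monomial.

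Finally I would apply the Buchberger criterion to $G_{r,n}$, checking that every $S$-polynomial reduces to zero modulo $G_{r,n}$. The coprime-leading-term check handles many pairs immediately: $z_\emptyset$ against any other generator, and Sperner binomials $z_A z_B$ versus $z_C z_D$ whose variable sets are disjoint. For non-coprime pairs I would organize a case analysis by the seven types, using the Sperner relations and the already-constructed generators of $G_{r,n}$ to reduce each $S$-polynomial to zero. The main obstacle is precisely this last step: controlling the interactions among pairs drawn from types 3, 5, 6, and 7, where subset-inclusion constraints compound in subtle ways, and verifying that no reduction produces a monomial outside $N_{r,n}$ that would force an eighth family to be adjoined. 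The delicate combinatorial conditions selected in Proposition~\ref{LT} are what make this closure hold — they are simultaneously tight enough to absorb every new leading term produced, and loose enough that each reduction actually terminates at $0$.
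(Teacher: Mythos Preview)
Your construction phase largely parallels the paper's: you identify the initial leading terms $z_\emptyset$, $z_{[k]}^r$, $z_Az_B$, and then produce elements of $J_{r,n}$ realizing the remaining types by multiplying $\hat{e}_{r,k}$ against suitable monomials and reducing via the Sperner binomials. The paper does the same, phrasing each construction as an explicit $S$-polynomial computation (and carrying out the rather delicate case analysis for types 5, 6, and 7 that you only sketch).

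Where you diverge is in the closing step, and this is where the paper's key idea lies. You propose to finish by verifying the Buchberger criterion directly---checking that every $S$-polynomial among the enlarged generating set reduces to zero. You correctly flag this as the main obstacle, and indeed the paper does \emph{not} attempt it. Instead, the paper bypasses the criterion entirely with a dimension argument: using that $R_n$ is Cohen--Macaulay (Hochster's theorem) and that the generators of $invar(r,n)$ form a homogeneous system of parameters, one computes
\[
\Hilb{T_n/J_{r,n}}{t} = A_n(t)\cdot(1+t+\cdots+t^{r-1})^n,
\]
so $\dim_\C(T_n/J_{r,n}) = r^n n!$. But Proposition~\ref{LT} shows the monomials outside $N_{r,n}$ are exactly the $r^n n!$ negative descent basis elements. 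Once you have shown $N_{r,n}\subseteq\langle\LT(J_{r,n})\rangle$, the inequality $\dim_\C(T_n/\langle\LT(J_{r,n})\rangle)\leq\dim_\C(T_n/N_{r,n})=r^n n!$ combined with $\dim_\C(T_n/\langle\LT(J_{r,n})\rangle)=\dim_\C(T_n/J_{r,n})=r^n n!$ forces equality, and you are done---no further $S$-polynomials can introduce new leading terms.

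Your route would in principle work, but completing it would require an exhaustive closure verification substantially more intricate than the already-lengthy leading-term computations the paper carries out. The dimension shortcut is the missing idea that makes the argument tractable.
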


Prior to proving the general Gr\"obner basis result, it is useful to consider a small example.
Take $J_{3,2}=\langle z_{\{1\}}z_{\{2\}}-z_\emptyset z_{[2]}, z_\emptyset, z_{\{1\}}^3+z_{\{2\}}^3,z_{[2]}^3\rangle$. From the list of desired leading terms given in Proposition \ref{LT}, the only term not immediately accounted for is $z_{\{2\}}^3$. The only nontrivial $S$-polynomial to consider initially is
	\begin{align*}
	S(z_{\{1\}}z_{\{2\}}-z_\emptyset z_{[2]}, z_{\{1\}}^3+z_{\{2\}}^3) & =\frac{z_{\{1\}}^3z_{\{2\}}}{z_{\{1\}}z_{\{2\}}}\cdot\left(z_{\{1\}}z_{\{2\}}-z_\emptyset z_{[2]}\right)-\frac{z_{\{1\}}^3z_{\{2\}}}{z_{\{1\}}^3}\cdot \left(z_{\{1\}}^3+z_{\{2\}}^3\right)\\
	 & =-z_{\{2\}}^4-z_\emptyset z_{\{1\}}^2 z_{\{1,2\}}.
	\end{align*}
Under our term order, the leading term is $-z_{\{2\}}^4$, which is as desired. 
In order to show that no additional polynomials appear in the Gr\"obner basis, an exhaustive check of all other $S$-polynomials shows they reduce to 0. 
Alternatively, we can argue that no other terms will appear because we can compute that $\dim_\C(T_2/J_{3,2})=3^2\cdot 2=18$ via a Hilbert series argument that is explicitly given by \eqref{HilbDegreeArguement} in the proof below, thus no other leading terms can appear without contradicting this known dimension. 
In small examples, either argument will suffice. However, for arbitrary $r$ and $n$, the latter argument is more efficient. 

\begin{proof}[Proof of Theorem~\ref{gb}]
Use the term order for $T_n$ described above.
Our proof will involve computing $S$-polynomials starting from the generators of $J_{r,n}$.
To minimize the number of computations required, we first make a dimension argument showing that the number of monomials outside of the leading term ideal for $J_{r,n}$ is the number of elements of the negative descent basis.
We then compute $S$-polynomials to produce elements with all of the leading terms listed in Proposition~\ref{LT}, which will complete the proof. 
We will compute the $S$-polynomials for arbitary $r$, but we will make two dimension arguments, for $r=1$ and $r\geq 2$. 

Consider $r=1$. 
It is a straightforward observation to notice that the number of elements of $p\in R_n$ such that $\deg(p)=t^k$ are precisely the lattice points at height $k$ in the $\cone([0,1]^n)$ and the cardinality of these elements is $(k+1)^n$.  
Combining this observation with \cite[Proposition 1.4.4]{EC1}, we see that the Hilbert series of $R_n$ is given by 
	\[
	\Hilb{R_n}{t}= \sum_{k\geq 0}(k+1)^n t^k=\frac{A_n(t)}{(1-t)^{n+1}}
	\]
where $A(n)=\sum_{\pi\in S_n}t^{\des(\pi)}$ is the Eulerian polynomial. 
Let $\mathcal{C}_{1,n}:=\C[\hat{e}_k+I_n|0\leq k\leq n]$, and note that the elements $\hat{e}_k+I_n$ are algebraically independent since they specialize in $R_n$ (by setting $t=1$) to the usual elementary symmetric functions; note that $\Hilb{\mathcal{C}_{1,n}}{t}=\frac{1}{(1-t)^{n+1}}$.
Hochster's Theorem implies that $R_n$ is Cohen-Macaulay \cite{Hochster}, and since $invar(1,n)$ is an ideal generated by an algebraically independent system of parameters, we have 
	\[
	\Hilb{T_n/J_{1,n}}{t}=A_n(t)
	\] 	  
by \cite[Lemma 17.1]{Hibi}. 
The $\C$--dimension of $T_n/J_{1,n}$ is 
\[
\dim_{\C}(T_n/J_{1,n})=\Hilb{T_n/J_{1,n}}{1}=A(1)=n! \, ,
\]
 which is the number of elements in the Garsia-Stanton descent basis, as desired.	

Now, suppose that $r\geq 2$. 
Let $\mathcal{C}_{r,n}=\C[z_\emptyset+I_n,\hat{e}_{r,k}+I_n | 1\leq k\leq n]$. 
Given that $R_n$ is Cohen-Macaulay and that $\hat{e}_{r,k}+I_n$ and $z_\emptyset+I_n$ are algebraically independent, hence $\Hilb{\mathcal{C}_{r,n}}{t}=\frac{1}{(1-t)(1-t^r)^n}$, we have that
	\[
	\Hilb{R_n}{t}= \sum_{k\geq 0}(k+1)^n t^k=\frac{B_{r,n}(t)}{(1-t)(1-t^r)^{n}}
	\]
where $B_{r,n}(t)=A_n(t)\cdot (1+t+\cdots+t^{r-1})^n$ by our previous calculation for $r=1$.
Thus, 
\[
	\Hilb{T_n/J_{r,n}}{t}=A_n(t)\cdot (1+t+\cdots+t^{r-1})^n
\]
from which we can conclude that 
\begin{equation}\label{HilbDegreeArguement}
\dim_{\C}(T_n/J_{r,n})=\Hilb{T_n/J_{r,n}}{1}=B_{r,n}(1)=r^nn! \, ,
\end{equation}
which is the number of elements in the negative descent basis, as desired.

Next, we move to $S$-polynomial calculations.
Our goal is to compute $S$-polynomials until all the elements listed in Proposition~\ref{LT} arise as leading terms; since at that point we will have reached the correct value of $\dim_\C (T_n/J_{r,n})=\dim_\C (T_n/\LT(J_{r,n}))$, we must have a Gr\"obner basis.

We begin by noting that some of our desired leading terms arise from the generators of $J_{r,n}$.
First, $z_Az_B$ such that $A\not\subset B$ and $B\not\subset A$ where $A\neq[k]\neq B$ for any $k$ are leading terms of $I_n$. 
The monomials $z_\emptyset$ and $z_A^r$ where $A=[k]$ for $k=1,\ldots,n$ are the leading terms of $invar(r,n)$.
These account for the fourth, first, and second items listed in Proposition~\ref{LT}, respectively. 

To obtain an element with the leading term $z_A^{r+1}$ as given in the third bullet of Proposition~\ref{LT}, suppose that $|A|=k$ and consider the following $S$-polynomial:
	
\begin{align*}
  & S(\hat{e}_{r,k},z_{[k]}z_A-z_{[k]\cap A}z_{[k]\cup A} ) \\
= & \frac{z_{[k]}^rz_A}{z_{[k]}^r}\left(z_{[k]}^r+z_{A_1}^r+z_{A_2}^r+\cdots +z_{A}^r+\cdots + z_{A_{{n \choose k}-1}}^r \right) \\
  & \hspace{45mm} -\frac{z_{[k]}^rz_A}{z_{[k]}z_A}(z_{[k]}z_A-z_{[k]\cap A}z_{[k]\cup A}) \\
= & z_A\left(z_{A_1}^r+z_{A_2}^r+\cdots +z_{A}^r+\cdots + z_{A_{{n \choose k}-1}}^r \right)+z_{[k]}^{r-1}z_{[k]\cap A}z_{[k]\cup A}
\end{align*}

Note that the term order implies that 
\[
z_Az_{A_1}^r>z_Az_{A_2}^r>\cdots>z_Az_{A}^r>\cdots z_Az_{A_{{n \choose k}-1}}^r>z_{[k]}^{r-1}z_{[k]\cap A}z_{[k]\cup A}
\]
However, for each $i$ where $A_i\neq A$, $z_Az_{A_i}$ is the leading term of a polynomial of $J_{r,n}$, and we use $z_Az_{A_i}-z_{A\cap A_i}z_{A\cup A_i}\in J_{r,n}$ to rewrite $z_Az_{A_i}^r$, yielding 
	\begin{equation}
	S(\hat{e}_{r,k},z_{[k]}z_A-z_{[k]\cap A}z_{[k]\cup A} )=z_A^{r+1}+\sum_j z_{A\cap A_j}z_{A_j}^{r-1} z_{A\cup A_j}\label{term}
	\end{equation}
where the sum is over all $j$ such that $|A_j|=k$, $A_j\neq A$, and $A\cap A_j\neq \emptyset$, since any terms involving $z_\emptyset$ are elements of $J_{r,n}$.
The observation that $|A|<|A\cup A_j|$ for all such $j$ implies that $z_A^{r+1}$ is the leading term of this polynomial, as desired.
	
Assume that we have added all prior $S$-polynomial calculations to the generators of $J_{r,n}$.
To obtain terms of the form $z_A^r z_B$, where $A\subset B$ with $\max(A)<\min(B \setminus A)$ as listed in the fifth bullet of Proposition~\ref{LT}, let $|A|=k$.
We compute the $S$-polynomial of $\hat{e}_{r,k}$ and the generator of $I_n$ with leading term $z_{[k]}z_{B}$. 
Note that $z_{[k]}z_{B}$ is the leading term of a generator of $I_n$, since by assumption $A\neq [k]$ thus if $[k]\subset B$ this would violate the condition $\max(A)<\min(B \setminus A)$. 
We compute:
	
	\begin{align*}
	& S(\hat{e}_{r,k},z_{[k]}z_{B}-z_{[k]\cap B}z_{[k]\cup B})\\  = &  \frac{z_{[k]}^r z_B}{z_{[k]}^r }\left(z_{[k]}^r +z_{A_1}^r+z_{A_2}^r+\cdots +z_{A}^r+\cdots + z_{A_{{n \choose k}-1}}^r \right)\\
	 & \hspace{45mm} -\frac{z_{[k]}^r z_B}{z_{[k]}z_B}\left( z_{[k]}z_{B}-z_{[k]\cap B}z_{[k]\cup B}\right)\\
	 = & z_B\left(z_{A_1}^r+z_{A_2}^r+\cdots +z_{A}^r+\cdots + z_{A_{{n \choose k}-1}}^r \right)+z_{[k]}^{r-1}z_{[k]\cap B}z_{[k]\cup B}
	\end{align*}
We have the ordering
	\[
	z_{A_1}^r z_B>z_{A_2}^r z_B>\cdots >z_A^r z_B>\cdots >z_{A_{{n\choose k}-1}}^r z_B>z_{[k]}^{r-1}z_{[k]\cap B}z_{[k]\cup B} \, .
	\]
Moreover, by the condition $\max(A)<\min(B \setminus A)$ and the use of lexicographic order on subsets, we know that ${A_i}\not\subset B$ for all $i$ such that $z_{A_i}^r z_B>z_A^r z_B$.
This is true because if $A_i\subset B$, then there exists some $j\in A_i$, $j\not\in A$ so that $\max(A)<j$ and the condition that $|A|=|A_i|$ implies that there must exist some $s\in A$ such that $s\not\in A_i$ and for all $t\in A_i$ such that $t<s$ we have $t\in A$, which would contradict  $z_{A_i}^r z_B>z_A^r z_B$ by the definition of our variable ordering arising from the lexicographic ordering on subsets.
The condition that ${A_i}\not\subset B$ implies that $z_{A_i}^r z_B$ is a leading term of a polynomial in $I_n$.
Applying $z_{A_i}z_B-z_{A_i\cap B}z_{A_i\cup B}\in J_{r,n}$ to the term $z_{A_i}^rz_B$ will produce $z_{A_i\cap B}z_{A_i}^{r-1}z_{A_i\cup B}<z_A^r z_B$. 
Therefore, we will have
	\begin{align*}
	&S(\hat{e}_{r,k},z_{[k]}z_B-z_{[k]\cap B}z_{[k]\cup B}) = \\
        &z_A^rz_B+\sum_jz_{A_j}^r z_B+\sum_m z_{A_m\cap B}z_{A_m}^{r-1}z_{A_m\cup B}
	\end{align*}
where the first sum is over all $j$ so that $|A_j|=|A|$, $A_j\neq A$, and $A_j\subset B$, which implies that $z_{A}>z_{A_j}$ by condition $\max(A)<\min(B \setminus A)$. 
The second sum is over all $m$ such that $|A_m|=|A|$ where $A_m$ and $B$ are a Sperner 2-pair with $A_m\cap B\neq \emptyset$, as if the intersection was empty then the resulting term would be a multiple of $z_\emptyset$ and hence an element of $J_{r,n}$.
It follows from a simple cardinality argument that $z_{A_m\cup B}<z_B$, and thus $z_A^rz_B$ is a leading term in $J_{r,n}$.

Assume again that we have added all prior $S$-polynomial calculations to the generators of $J_{r,n}$.
To obtain terms of the form $z_Az_B^r$ where there is an $\ell$ such that $[\ell]\not\subset A$, $[\ell]\subset B$ and $B\setminus A\subset [\ell]$, as listed in the sixth bullet of Proposition~\ref{LT}, let $|B|=k$.
We compute the $S$-polynomial of $\hat{e}_{r,k}$ and the generator of $I_n$ with leading term $z_Az_{[k]}$, which is a leading term since there exists an element $x\in[\ell]\subset[k]$ such that $x\notin A$ and there also exists $y=\max(A)=\max(B)\notin[k]$:
	\[
	\begin{array}{rcl}
	& & S(\hat{e}_{r,k},z_Az_{[k]}-z_{A\cap[k]}z_{A\cup[k]}) \\
        & = & \frac{z_Az_{[k]}^r}{z_{[k]}^r}\left( z_{[k]}^r+z_{B_1}^r+\cdots +z_{B}^r+\cdots+z_{B_{{n\choose k}-1}}^r \right)\\
	& & -\frac{z_Az_{[k]}^r}{z_Az_{[k]}}\left(z_Az_{[k]}-z_{A\cap[k]}z_{A\cup[k]} \right)\\
	& = & z_A \left(z_{B_1}^r+\cdots +z_{B}^r+\cdots+z_{B_{{n\choose k}-1}}^r \right) +z_{[k]}^{r-1}z_{A\cap[k]}z_{A\cup[k]}
	\end{array}
	\]	
which yields the term order of 
	\[
	z_Az_{B_1}^r>z_Az_{B_2}^r>\cdots>z_Az_B^r>\cdots z_Az_{B_{{n\choose k}-1}}^r>z_{[k]}^{r-1}z_{A\cap[k]}z_{A\cup[k]} \, .
	\]
Note that $A\not\subset B_i$ for all $i$ such that $z_{B_i}>z_B$. 
This is true because if $A\subset B_i$ for $B_i\neq B$, then given that $|B|=|B_i|$ we must have $z_{B}>z_{B_i}$ because $B\setminus A$ contains precisely the smallest elements not contained in $A$ and thus $B_i\setminus A$ must contain at least one larger element meaning that $B_i$ is lexicographically after $B$.
The condition that $A\not\subset B_i$ for all $i$ such that $z_{B_i}>z_B$ implies that $z_Az_{B_i}$ is the leading term of a polynomial in $I_n$.
As in our previous cases, this leads to the calculation
	\begin{align*}
	&S(\hat{e}_{r,k},z_Az_{[k]}-z_{A\cap [k]}z_{A\cup [k]})= \\
        &z_Az_B^r+\sum_j z_A z_{B_j}^r+\sum_m z_{A\cap B_m}z_{B_m}^{r-1}z_{A\cup B_m}
	\end{align*}
where the first sum is over all $j$ such that $|B_j|=|B|$, $B\neq B_j$, and $A\subset B_j$. The second sum is over all $m$ such that $A$ and $B_m$ are a Sperner 2-pair with $A\cap B_m\neq \emptyset$.
Notice that we know that $|B|=k$ and $B\neq [k]$ which says that there is at least some subset $\{j_1,\ldots,j_t\}\subset B$ such that $j_i>k$ for all $i$ and the defining condition $[\ell]\not\subset A$, $[\ell]\subset B$ and $B\setminus A\subset [\ell]$ implies that $j_i\in A$ for some $i$. Thus, $|A\cup [k]|>k=|B|$.
Ergo, we have $z_{A}z_{B}^r$ as the leading term.	

Our final case is to obtain the terms listed in the seventh bullet of Proposition~\ref{LT}, i.e. those of type $z_{A_1}z_{A_2}^rz_{A_3}$ where $A_1\subset A_2\subset A_3$ and $\max(A_2\setminus A_1)<\min(A_3\setminus A_2)$ with $A_2\neq [j]$ for all $j$.
Assume that we have added all prior $S$-polynomials to the generators of $J_{r,n}$.
We consider the $S$-polynomial for the elements $z_{A_2}z_{A_1\cup(A_3\setminus A_2)}-z_{A_1}z_{A_3}$ and the generator from \eqref{term} given by $z_{A_2}^{r+1}+\sum_j z_{A_2\cap C_j}z_{C_j}^{r-1}z_{A_2\cup C_j}$ where $|C_j|=|A_2|=k$, $A_2\neq C_j$, and $A_2\cap C_j\neq \emptyset$.
Let $B:=A_1\cup(A_3\setminus A_2)$ for convenience of notation, and compute:

	\begin{align*}
	& \displaystyle S\left(z_{A_2}^{r+1}+\sum_j z_{A_2\cap C_j}z_{C_j}^{r-1}z_{A_2\cup C_j},z_{A_2}z_{B}-z_{A_1}z_{A_3} \right)\\ 
= & \displaystyle \frac{z_{A_2}^{r+1}z_{B}}{z_{A_2}^{r+1}}\left(z_{A_2}^{r+1}+\sum_j z_{A_2\cap C_j}z_{C_j}^{r-1}z_{A_2\cup C_j}\right)\\
	  & \hspace{45mm} \displaystyle -\frac{z_{A_2}^{r+1}z_{B}}{z_{A_2}z_{B}}\left(z_{A_2}z_{B}-z_{A_1}z_{A_3} \right)\\
	 = & \displaystyle  z_B\sum_j z_{A_2\cap C_j}z_{C_j}^{r-1}z_{A_2\cup C_j}+z_{A_1}z_{A_2}^r z_{A_3}\\
	 = & \displaystyle z_{A_1\cup(A_3\setminus A_2)}\sum_j z_{A_2\cap C_j}z_{C_j}^{r-1}z_{A_2\cup C_j}+z_{A_1}z_{A_2}^rz_{A_3}
	\end{align*}

We now wish to show the $z_{A_1}z_{A_2}^r z_{A_3}$ is the leading term. 
Consider the terms involving $C_j$. There are three possible cases
	\begin{enumerate}[{\bf 1.}]
	\item $|A_2\cup C_j|>|A_3|$
	\item $|A_2\cup C_j|<|A_3|$
	\item $|A_2\cup C_j|=|A_3|$
	\end{enumerate}
which we consider individually.

\noindent {\bf Case 1:} If we have that $|A_2\cup C_j|>|A_3|$, then we have $z_{A_1}z_{A_2}^r z_{A_3}>  z_{A_2\cap C_j} z_{A_1\cup(A_3\setminus A_2)}z_{C_j}^{r-1} z_{A_2\cup C_j}$ immediately by the definition of graded reverse lexicographic order. 

\noindent {\bf Case 2:} Suppose that we have $|A_2\cup C_j|<|A_3|$. Note that this implies that there exists $x\in A_3$ such that $x\not\in A_2\cup C_j$ and hence $x\in A_1\cup(A_3\setminus A_2)$. We also have $y\in A_2\cup C_j$ such that $y\not\in A_1\cup (A_3\setminus A_2)$. Hence, we have that $A_1\cup (A_3\setminus A_2)$ and $A_2\cup C_j$ are a Sperner 2-pair. This implies that we can replace the monomial $z_{A_2\cap C_j} z_{C_j}^{r-1} z_{A_1\cup(A_3\setminus A_2)} z_{A_2\cup C_j}$ with the monomial 
	\begin{align*}
	& z_{A_2\cap C_j}z_{C_j}^{r-1}z_{(A_1\cup(A_3\setminus A_2))\cap (A_2\cup C_j)}z_{(A_1\cup(A_3\setminus A_2))\cup (A_2\cup C_j)}\\
	= & z_{A_2\cap C_j}z_{C_j}^{r-1}z_{A_1\cup (C_j\cap(A_3\setminus A_2))}z_{A_3\cup C_j}
	\end{align*}
	
It is clear that $|A_3\cup C_j|\geq |A_3|$. 
If the inequality is strict, then we are done. 
If $A_3\cup C_j=A_3$, note that $C_j\subset A_3$ and that $C_j\cap (A_3\setminus A_2)\neq \emptyset$ since $|C_j|=|A_2|$.
We will now consider the variable $z_{A_1\cup (C_j\cap(A_3\setminus A_2))}$. We note that two subcases arise:
	\begin{enumerate}[{\bf {2.}i.}]
	\item $A_1\cup(C_j\cap (A_3\setminus A_2))=A_1\cup C_j$ (equivalently $C_j\cap A_1=C_j\cap A_2$)
	\item $A_1\cup(C_j\cap (A_3\setminus A_2))$ and $A_2\cap C_j$ are a Sperner 2-pair. 
	\end{enumerate}

\underline{Subcase 2.i}: Note that $|A_1\cup C_j|\geq |A_2|$ with equality occurring if $A_1\cup C_j=C_j$. If the inequality is strict, we are done. If $A_1\cup C_j=C_j$, then $|C_j|=|A_2|$, but since $C_j\cap A_1=C_j\cap A_2$ and $C_j\cap(A_3\setminus A_2)\neq \emptyset$, the condition $\max(A_2\setminus A_1)<\min(A_3\setminus A_2)$ implies that $A_2$ is lexicographically before $C_j$, which is desired. 

\underline{Subcase 2.ii}: The existence of such a Sperner 2-pair allows us to replace the monomial through division by 
	\begin{align*}
	&  z_{(A_1\cup(C_j\cap (A_3\setminus A_2)))\cap (A_2\cap C_j)}z_{(A_1\cup(C_j\cap (A_3\setminus A_2)))\cup (A_2\cap C_j)}z_{C_j}^{r-1}z_{A_3}\\ 
	= & z_{(A_1\cup(C_j\cap (A_3\setminus A_2)))\cap (A_2\cap C_j)}z_{A_1\cup C_j}z_{C_j}^{r-1}z_{A_3}
	\end{align*}		

Showing the desired outcome is now identical to the argument in Subcase 2.i.

\noindent {\bf Case 3:} Suppose that we have $|A_2\cup C_j|=|A_3|$. 
In this case, it is sufficient to consider the following three plausible sub-cases.
	\begin{enumerate}[{\bf {3.}i.}]
	\item $A_2\cup C_j$ and $A_1\cup(A_3\setminus A_2)$ are a Sperner 2-pair.
	\item The subcase  3.i. is false, but $A_2\cap C_j$ and $A_1\cup(A_3\setminus A_2)$ are a Sperner 2-pair.
	\item $A_2\cap C_j$, $A_2\cup C_j$, and $A_1\cup(A_3\setminus A_2)$ have no Sperner 2-pairs between them. 
	\end{enumerate}

\underline{Subcase 3.i}: Suppose we have that the sets  $A_2\cup C_j$ and $A_1\cup(A_3\setminus A_2)$ are a Sperner 2-pair. This means that via division, we can replace the existing monomial $z_{A_2\cap C_j} z_{C_j}^{r-1} z_{A_1\cup(A_3\setminus A_2)} z_{A_2\cup C_j}$ with the monomial
	\begin{align*}
	 & z_{A_2\cap C_j}z_{C_j}^{r-1}  z_{(A_2\cup C_j)\cap (A_1\cup(A_3\setminus A_2))}z_{(A_2\cup C_j)\cup (A_1\cup(A_3\setminus A_2))}\\ 
	 = & z_{A_2\cap C_j}z_{C_j}^{r-1}  z_{(A_2\cup C_j)\cap (A_1\cup(A_3\setminus A_2))}z_{A_3\cup C_j}
	\end{align*}

By virtue of the Sperner 2-pair assumptions, we have that there exists $x\in C_j$ such that $x\not\in A_3$, which yields $|A_3\cup C_j|>|A_3|$ and hence
	\[
	z_{A_1}z_{A_2}^rz_{A_3}>z_{A_2\cap C_j}z_{C_j}^{r-1} z_{(A_2\cup C_j)\cap (A_1\cup(A_3\setminus A_2))}z_{A_3\cup C_j}
	\]
and we are done.
	
\underline{Subcase 3.ii}: Suppose that $A_2\cap C_j$ and  $A_1\cup (A_3\setminus A_2)$ are a Sperner 2-pair, but that $A_2\cup C_j$ and $A_1\cup(A_3\setminus A_2)$ are not. Then note that we have  $A_1\cup(A_3\setminus A_2)\subset A_2\cup C_j$, which implies that $A_3\setminus A_2\subset C_j$, and hence $A_2\cup C_j=A_3$ by the cardinality assumption. Now, by the existence of the Sperner 2-pair, we can replace via division the existing monomial $z_{A_2\cap C_j} z_{C_j}^{r-1} z_{A_1\cup(A_3\setminus A_2)} z_{A_2\cup C_j}$ with the monomial
	\[
	z_{(A_2\cap C_j)\cap(A_1\cup(A_3\setminus A_2))}z_{(A_2\cap C_j)\cup(A_1\cup(A_3\setminus A_2))}z_{C_j}^{r-1} z_{A_3}
	\]
Moreover, notice that $C_j\subseteq ((A_2\cap C_j)\cup(A_1\cup(A_3\setminus A_2)))$. If the equality is strict, we have that $|A_2|<| ((A_2\cap C_j)\cup(A_1\cup(A_3\setminus A_2)))|$ and we are done. If we have equality, then we know $|A_2|=| ((A_2\cap C_j)\cup(A_1\cup(A_3\setminus A_2)))|$. By the assumption that $\max(A_2\setminus A_1)<\min(A_3\setminus A_2)$, this implies that $A_2$ is lexicographically before $((A_2\cap C_j)\cup(A_1\cup(A_3\setminus A_2)))$. Thus we will have
	\[
	z_{A_1}z_{A_2}^r z_{A_3}> z_{(A_2\cap C_j)\cap(A_1\cup(A_3\setminus A_2))}z_{(A_2\cap C_j)\cup(A_1\cup(A_3\setminus A_2))}z_{C_j}^{r-1}z_{A_3}
	\] 
which is as desired. 

\underline{Subcase 3.iii}: Suppose that the sets $A_2\cap C_j$, $A_2\cup C_j$, and $A_1\cup(A_3\setminus A_2)$ have no Sperner 2-pairs between them. This implies the following containment 
	\[
	A_2\cap C_j\subset A_1\cup(A_3\setminus A_2) \subset A_2\cup C_j=A_3
	\]
because $A_2\cap C_j\subseteq A_1$ and $A_3\subseteq A_2\cup C_j$, which follows from the necessary containment and the fact that these sets have the same cardinality. 
These observations allow us to conclude that $C_j\subseteq A_1\cup(A_3\setminus A_2)$. If the containment is strict, we have that $|A_1\cup(A_3\setminus A_2)|>|A_2|$ and we are done. If equality holds, we have $|A_1\cup(A_3\setminus A_2)|=|A_2|$. However, the assumed condition that $\max(A_2\setminus A_1)<\min(A_3\setminus A_2)$ implies that $A_2$ is lexicographically before $A_1\cup(A_3\setminus A_2)$. Thus, we have that 
	\[
	z_{A_1}z_{A_2}^rz_{A_3}>z_{A_2\cap C_j}z_{A_1\cup(A_3\setminus A_2)}z_{C_j}^{r-1}z_{A_3}
	\]		 		
which is our desired result. 

Given all of the above, we can conclude that 
	\[
	S\left(z_{A_2}^r+\sum_j z_{A_2\cap C_j}z_{C_j}^{r-1}z_{A_2\cup C_j},z_{A_2}z_{B}-z_{A_1}z_{A_3} \right)=z_{A_1}z_{A_2}^rz_{A_3}+p_{A_1A_2^rA_3}
	\]		
where $p_{A_1A_2^rA_3}$ is a polynomial with $\LT(p_{A_1A_2^rA_3})<z_{A_1}z_{A_2}^rz_{A_3}$.

We have now shown that all of our desired leading terms appear through the optimized Buchberger Algorithm. 
Because of our previous dimension calculation for $T_n/J_{r,n}$, we know that no additional leading terms can result form further computations, thus we have a Gr\"obner basis. 
\end{proof}

We have thus established Theorem \ref{basistheorem}, as it follows immediately from Theorem~\ref{Mac}, Proposition \ref{LT}, and Theorem \ref{gb}.


\section{Combinatorial identities}\label{proofs}

We will now compute multigraded Hilbert series to prove Theorems~\ref{EMI} and~\ref{neg}. Recall from Section \ref{Intro} that we can define a Hilbert series with respect to a $\Z^m$-grading for any $m\geq 1$ as in~\eqref{HilbertSeries}. We now define the $\Z^2$-grading which arises from the the defined degree on variables $\deg(z_A)=tq^{|A|}$, where we note that $\deg(z_\emptyset)=t$. We denote this bivariate Hilbert series as $\Hilb{A}{t,q}$ for a graded module $A$ of $T_n$.
It is straightforward \cite{B-B} to show that $\Hilb{R_n}{t,q}=\sum_{k\geq 0}[k+1]_q^n t^k$, which we assume for both of the following proofs.
We will use the notation $\mathcal{C}_{r,n}$ introduced in the proof of Theorem~\ref{gb}.

\begin{proof}[Proof of Theorem~\ref{EMI}]
Given that $R_n$ is Cohen-Macaulay and the elements of $invar(1,n)$ are an algebraically independent homogeneous system of parameters as argued in the proof of Theorem~\ref{gb}, we can express the Hilbert series in the form 
	\[
	\Hilb{R_n}{t,q}=\frac{\Hilb{T_n/J_{1,n}}{t,q}}{\prod_{j=0}^n(1-tq^j)}\, .
	\]
This follows because it is an elementary exercise to compute that  
	\[
	\Hilb{\mathcal{C}_{1,n}}{t,q}=\frac{1}{(1-t)(1-tq)\cdots(1-tq^n)} \, .	
	\]
To compute the numerator, we have  
	\[
	\Hilb{T_n/J_{1,n}}{t,q}=\sum_{\pi\in S_n}\deg(\api)=\sum_{\pi\in S_n}t^{\des(\pi)}q^{\maj(\pi)}
	\]
by using the basis for $T_n/J_{1,n}$ given by Theorem~\ref{basistheorem}. This completes the proof.	
\end{proof}

\begin{proof}[Proof of Theorem~\ref{neg}]
Given that $R_n$ is Cohen-Macaulay and $invar(r,n)$ is an algebraically independent homogenous system of parameters, we can express the Hilbert series as
	\[
	\Hilb{R_n}{t,q}=\frac{\Hilb{T_n/J_{r,n}}{t,q}}{(1-t)\prod_{j=1}^n(1-t^rq^{rj})}.
	\]
This follows because, as in the previous proof, it is straightforward to show that	
	\[
	\Hilb{\mathcal{C}_{r,n}}{t,q}=\frac{1}{(1-t)(1-t^r q^r)(1-t^r q^{2r})\cdots(1-t^r q^{rn})} \, .
	\]
Hence, we compute the numerator by employing the basis given in Theorem \ref{basistheorem}
	\[
	\begin{array}{rcl}
	\Hilb{T_n/J_{r,n}}{t,q} & = & \displaystyle \sum_{(\pi,X)\in \Z_r \wr S_n}\deg(b^r_{(\pi,X)})\\
	& = & \displaystyle\sum_{(\pi,X)\in \Z_r \wr S_n} t^{\des(\pi)}q^{\maj(\pi)}t^{|X|}q^{\sum_{i\in X}i}\\
	& = & \displaystyle\sum_{(\rho,\epsilon)\in \Z_r \wr S_n}t^{\ndes(\rho,\epsilon)}q^{\nmajor(\rho,\epsilon)} \, ,
	\end{array}
	\] 
completing the proof.	
\end{proof}


\section{Concluding Remarks}

It is worth mentioning that when $r=1$ there is a graded $S_n$-module isomorphism between $T_n/J_{1,n}$ and $\C[x_1,x_2,\ldots, x_n]/\mathscr{I}_n$.

\begin{theorem}\label{isomorphism}
The map $\phi:T_n/J_{1,n}\to \C[x_1,x_2,\ldots, x_n]/\mathscr{I}_n$ defined by algebraically extending $z_A+J_{1,n}\mapsto\prod_{i\in A}x_i +\mathscr{I}_n$ is an $S_n$-isomorphism.
\end{theorem}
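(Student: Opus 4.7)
The plan is to verify that $\phi$ is a well-defined, $S_n$-equivariant, graded ring homomorphism, and then invoke Theorem~\ref{basistheorem} together with the classical Garsia--Stanton result to obtain bijectivity essentially for free.

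First I would check well-definedness by showing that the generators of $J_{1,n} = invar(1,n)+I_n$ land in $\mathscr{I}_n$. The invariant generators $\hat{e}_k = \sum_{|A|=k} z_A$ map to the classical elementary symmetric polynomials $e_k(x_1,\ldots,x_n)$, which generate $\mathscr{I}_n$ by definition. For a toric generator $z_A z_B - z_{A\cap B} z_{A\cup B}$, both monomials map to $\prod_{i\in A\cup B} x_i \cdot \prod_{i\in A\cap B} x_i$, since every index in $A \cup B$ is hit with multiplicity $1$ or $2$ according to whether it lies in the symmetric difference or in $A \cap B$; thus the difference maps to $0$.

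Next, $S_n$-equivariance is immediate from the definitions of the two actions:
\[
\phi(\pi \cdot z_A) = \phi(z_{\pi(A)}) = \prod_{i \in \pi(A)} x_i = \pi \cdot \prod_{i \in A} x_i = \pi \cdot \phi(z_A),
\]
and this extends multiplicatively. For the graded structure I would use the grading on $T_n$ in which $\deg(z_A) = |A|$ (the same grading used in Section~\ref{proofs} for the $q$-variable) and total degree on $\C[x_1,\ldots,x_n]$; with these conventions $\phi$ preserves degrees on generators and hence everywhere.

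Bijectivity then drops out of the basis theorem. By Theorem~\ref{basistheorem}, the set $\{\api : \pi \in S_n\}$ is a basis for $T_n/J_{1,n}$, while the Garsia--Stanton theorem \cite{GarsiaStanton} provides $\{a_\pi : \pi \in S_n\}$ as a basis for $\C[x_1,\ldots,x_n]/\mathscr{I}_n$. Directly from the definitions,
\[
\phi(\api) = \prod_{j \in \Des(\pi)} \prod_{i=1}^{j} x_{\pi(i)} = a_\pi,
\]
so $\phi$ carries one basis bijectively onto the other and is therefore an isomorphism of graded $S_n$-modules.

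There is no real obstacle here: the content has already been absorbed into Theorem~\ref{basistheorem}, and the only mildly delicate choice is picking the grading convention $\deg(z_A)=|A|$ on $T_n$ (rather than the $t$-grading in which every $z_A$ has degree $1$) so that $\phi$ is a map of graded objects in the usual sense.
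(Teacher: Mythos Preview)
Your proof is correct and follows the same route as the paper: check that $\phi$ is a well-defined graded $S_n$-equivariant homomorphism, then observe that $\phi(\api)=a_\pi$ and invoke Theorem~\ref{basistheorem} together with the classical Garsia--Stanton basis to conclude bijectivity. You are in fact more explicit than the paper, which leaves both the well-definedness verification and the appeal to the two basis results implicit.
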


\begin{proof}
Consider $T_n/J_{1,n}$ under the $q$-grading used in the multigrading for Section~\ref{proofs}, i.e. $\deg(z_A)=|A|$. 
Let $\C[x_1,x_2,\cdots, x_n]/\mathscr{I}_n$ be graded by total degree. 
It is clear that $\phi$ respects grading, by definition. 
Moreover, it is clear that $\phi$ is an algebra isomorphism, since 
	\[
	\begin{array}{rcl}
	\phi(z_A+J_{1,n})\cdot \phi(z_B+J_{1,n}) & = & (\prod_{i\in A}x_i +\mathscr{I}_n)\cdot(\prod_{j\in B}x_j +\mathscr{I}_n)\\
	& = & (\prod_{i\in A}x_i)\cdot(\prod_{j\in B}x_j) +\mathscr{I}_n\\
	& = & \phi(z_Az_B +J_{1,n})
	\end{array}
	\]
which implies $	\phi(\api+J_{1,n})=a_\pi+\mathscr{I}_n$ for all $\pi\in S_n$.
	
Now we show that the action is preserved. 
Consider $z_A+J_{1,n}$ and $\sigma\in S_n$, and observe that 
	\[
	\begin{array}{rcl}
	\sigma\circ \phi (z_A+J_{1,n}) & = & \sigma\left(\prod_{i\in A}x_i \right)+\mathscr{I}_n\\
	& = & \prod_{i\in A}x_{\sigma(i)}+\mathscr{I}_n\\
	& = &  \prod_{i\in \sigma(A)}x_i+\mathscr{I}_n\\
	& = & \phi (z_{\sigma(A)}+J_{1,n})\\
	& = & \phi \circ \sigma (z_{A}+J_{1,n}) \, .
	\end{array}	
	\]		
\end{proof}
It would be interesting to determine if the representation-theoretic results of \cite{A-B-R} are easier to establish in the context of $T_n/J_{1,n}$ rather than $\C[x_1,x_2,\ldots, x_n]/\mathscr{I}_n$.



\end{document}